\theoremstyle{plain}
\newtheorem{lemma}{Lemma}[section]
\newtheorem{theorem}{Theorem}[section]
\newtheorem{corollary}{Corollary}[section]
\theoremstyle{definition}
\theoremstyle{remark}
\newtheorem{remark}{Remark}[section]
\numberwithin{equation}{section}
\newcommand{\abs}[1]{\left\vert#1\right\vert}
\newcommand{\p}{\partial}
\newcommand{\dist}{\mathrm{dist}}
\newcommand{\ls}{\leqslant}
\newcommand{\gs}{\geqslant}
\newcommand{\Rmnum}[1]{\mathrm{\expandafter\@slowromancap\romannumeral#1@}}
\begin{document}

\title[Sharp estimate of lower bound for the first eigenvalue]
{Sharp estimate of lower bound for the first \\ eigenvalue in
the Laplacian operator \\ on compact Riemannian manifolds}


\author[Yue He]{Yue He}

\address{Yue He,
Institute of Mathematics, School of Mathematics Sciences, Nanjing
Normal University, Nanjing 210023, China} \email{heyueyn@163.com
\& heyue@njnu.edu.cn.}

\thanks{Partially supported
by NNSF grant of P. R. China: No.\,11171158.}

\keywords{Compact Riemannian manifold,
Ricci curvature, Laplacian operator, the first (Dirichlet or Neumann)
eigenvalue, the diameter of manifold, inscribed radius of manifold.}

\subjclass[2000]{Primary 58G25,
58G11, 35J10,
35P05,
53C20.}

\date{\today}

\begin{abstract}
The aim of this paper is give a simple proof of some results in
\cite{Jun Ling-2006-IJM} and \cite{JunLing-2007-AGAG},
which are very deep studies in the sharp lower bound of the first
eigenvalue in the Laplacian operator on compact Riemannian manifolds with
nonnegative Ricci curvature. We also get a result about lower bound of the first
Neumann eigenvalue in a special case. Indeed, our estimate of lower bound
in the this case is optimal. Although the methods used in here
due to \cite{Jun Ling-2006-IJM} (or \cite{JunLing-2007-AGAG})
on the whole, to some extent we
can tackle the singularity of test
functions and also simplify greatly
much calculation in these references. Maybe this provides another way
to estimate eigenvalues.
\end{abstract}

\maketitle



\section{Introduction}

Suppose $(M,g)$ is an n-dimensional compact Riemannian manifold with Ricci curvature satisfies
\begin{equation}\label{Ricci-curvature}
\mathrm{Ric}(M)\gs (n-1)K
\end{equation}
for some nonnegative constant $K$.

Unlike upper bound estimates, lower bound estimates for an eigenvalue
is difficult to obtain. The study on the lower bound of the first positive
eigenvalue in the Laplacian operator on compact Riemannian manifolds,
can trace its history to a long time ago. In the meanwhile, there are many
works in this area. Among these works, the results of Li
(\cite{Li-1979}, \cite{Li-1982}), Li--Yau (\cite{Li-Yau-1980},
\cite{Li-Yau-1983}), Zhong--Yang \cite{Zhong-Yang-1983}, Yang
\cite{DaGang Yang-1999}, Ling \cite{Jun
Ling-2006-IJM}--\cite{Jun Ling-2007-MN}, Ling--Lu
\cite{Ling-Lu-2010}, Shi--Zhang \cite{Shi-Zhang-2007}, Qian--Zhang--Zhu
\cite{Qian-Zhang-Zhu-2012}, Andrews--Ni
\cite{Andrews-Ni-1111.4967}, and Andrews--Clutterbuck
\cite{Andrews-Clutterbuck-1204.5079}, etc., are all very well
known.
It is difficult to describe all references in this field. 
So we just outline a portion of
important works.

First of all, we state the following lower bound
estimate of the first eigenvalue, which is due to Lichnerowicz 1958
\cite{Lichnerowicz-1958} (also see Obata \cite{Obata-1962}) when M is a
compact manifold without boundary. Under the same
assumption \eqref{Ricci-curvature}, Escobar \cite{Escobar} proved that
if a compact manifold has a weakly convex boundary, the first nonzero Neumann eigenvalue
of M has the following lower bound \eqref{Lichnerowicz-estimate} as well.
\begin{theorem}\label{thm-Lichnerowicz}
$($see $\mathbf{Ling}$ \cite{DaGang Yang-1999}$)$ Assume that $\mathrm{Ric}(M)\gs
(n-1)K >0$. Let $\lambda_1$ be the first positive eigenvalue on
$M$ (with either Dirichlet or Neumann boundary condition if $\p
M\neq\emptyset$). If $\p M\neq\emptyset$, we also assume that $\p
M$ is of nonnegative mean curvature $\mathrm{tr} S\gs 0$ if
$\lambda_1$ is the first Dirichlet eigenvalue and $\p M$ is of
nonnegative definite second fundamental form $S\gs 0$ if
$\lambda_1$ is the first Neumann eigenvalue. Then
\begin{equation}\label{Lichnerowicz-estimate}
\lambda_1\gs nK
\end{equation}
\end{theorem}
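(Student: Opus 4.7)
The plan is to apply Bochner's formula to an eigenfunction $u$ associated with $\lambda_1$ (so $\Delta u = -\lambda_1 u$) and then integrate. Bochner's identity reads
$$\tfrac{1}{2}\Delta|\nabla u|^2 = |\nabla^2 u|^2 + \langle \nabla u, \nabla \Delta u\rangle + \mathrm{Ric}(\nabla u, \nabla u).$$
Three inputs would then be combined: the Cauchy--Schwarz bound $|\nabla^2 u|^2 \gs (\Delta u)^2/n = \lambda_1^2 u^2/n$, the eigenvalue equation $\langle \nabla u, \nabla \Delta u\rangle = -\lambda_1|\nabla u|^2$, and the curvature assumption \eqref{Ricci-curvature}. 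These yield the pointwise inequality
$$\tfrac{1}{2}\Delta|\nabla u|^2 \gs \tfrac{\lambda_1^2}{n} u^2 + \bigl((n-1)K - \lambda_1\bigr) |\nabla u|^2.$$

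Next, I would integrate this over $M$ and use the identity $\int_M |\nabla u|^2 = \lambda_1 \int_M u^2$, which follows from one integration by parts and holds under either Dirichlet or Neumann boundary conditions. The bulk terms collapse to
$$\tfrac{1}{2}\int_{\p M} \p_\nu |\nabla u|^2 \gs \tfrac{(n-1)\lambda_1}{n}(nK - \lambda_1) \int_M u^2.$$
Since $\lambda_1>0$ and $n\gs 2$, the factor $(n-1)\lambda_1/n$ is strictly positive, so the desired conclusion $\lambda_1 \gs nK$ would follow at once provided the boundary integral on the left is nonpositive (and automatically if $\p M = \emptyset$).

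The main obstacle, therefore, is verifying that $\p_\nu |\nabla u|^2 \ls 0$ pointwise on $\p M$ under each type of boundary condition. For the Dirichlet case, $u \equiv 0$ on $\p M$ forces $\nabla u = u_\nu\,\nu$ and a vanishing tangential Laplacian; the hypersurface decomposition $\Delta u = u_{\nu\nu} + H u_\nu + \Delta_{\p M} u$ with $H=\mathrm{tr}\, S$ then gives $u_{\nu\nu} = -H u_\nu$ on the boundary, whence
$$\p_\nu|\nabla u|^2 = 2\,\nabla^2 u(\nu, \nabla u) = 2 u_\nu u_{\nu\nu} = -2 H u_\nu^2 \ls 0.$$
For the Neumann case, $u_\nu \equiv 0$ on $\p M$ makes $\nabla u$ tangent to $\p M$, and the identity $\nabla^2 u(X,\nu) = X(u_\nu) - S(X,\nabla u)$ for tangent $X$ collapses to $-S(X,\nabla u)$, giving
$$\p_\nu|\nabla u|^2 = -2\, S(\nabla u, \nabla u) \ls 0.$$
The signs $H\gs 0$ and $S\gs 0$ assumed in the theorem are exactly what is needed in each case, and the argument closes. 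The delicate points are thus book-keeping: the correct sign in the Gauss-type decomposition of $\Delta u$, and carefully distinguishing $\nabla^2 u$ as a bilinear form on $TM$ from intrinsic quantities on $\p M$.
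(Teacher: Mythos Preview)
Your argument is the classical Lichnerowicz--Reilly--Escobar proof and is correct as written. However, note that the paper does not actually give its own proof of this theorem: Theorem~\ref{thm-Lichnerowicz} is stated in the introduction as a known background result, attributed to Lichnerowicz, Obata, Reilly and Escobar (via the reference to Yang~\cite{DaGang Yang-1999}), and is only used later through the consequence $\delta=(n-1)K/(2\lambda_1)<1/2$. So there is no in-paper proof to compare against; your Bochner-plus-integration approach with the boundary term analysis is exactly the standard argument from the cited literature.
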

This estimate provide no information when the above constant K vanishes. In
such case, Li-Yau [5] and Zhong-Yang [12] provided another lower bound.

It is an interesting problem to find a unified lower bound of the first non-zero
eigenvalue $\lambda_1$ in terms of the lower bound $(n-1)K$ of the Ricci curvature and
the diameter $d$, the inscribed radius $r$ and other geometric quantities, which do not
vanish as $K$ vanishes, of the manifold with positive Ricci curvature.

Later on, the maximum principle method which is rather different to
that before, was first used by Li 1979 \cite{Li-1979} in proving
eigenvalue estimates for compact manifolds.
%
From that time on, this method was then refined and used by many authors
(e.g., Li--Yau \cite{Li-Yau-1980}, Zhong--Yang \cite{Zhong-Yang-1983},
Yang \cite{DaGang Yang-1999}, etc.) for obtaining sharper eigenvalue estimates.

Soon after, using a improved maximum principle method, Li--Yau 1980
\cite{Li-Yau-1980} derived the following beautiful result in the case when
$K=0$.
\begin{theorem}\label{Li-Yau-1980 Main Theorem}
$(\mathbf{Li-Yau})$. Let $M$ be a compact Riemannian manifold, $\p
M=\phi$, $\mathrm{Ric}(M)\gs 0$, then $\lambda_1\gs
\frac{\pi^2}{4d^2}$, where $d=\mathrm{diam} (M)$ is the diameter
of $M$.
\end{theorem}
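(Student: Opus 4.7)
The plan is to prove the theorem via a sharp gradient estimate for the first eigenfunction, followed by integration along a minimizing geodesic joining its extremal points.

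\textbf{Setup.} Let $u$ solve $\Delta u + \lambda_1 u = 0$ on $M$. Since $\partial M = \emptyset$ and $\lambda_1 > 0$, $\int_M u = 0$, so $u$ changes sign. Replacing $u$ by $-u$ if necessary and rescaling, I may assume $\max_M u = 1$ and $\min_M u = -k$ with $k \in (0, 1]$. Fix $x_\pm$ realizing the extrema, and let $\gamma : [0, L] \to M$ be a unit-speed minimizing geodesic from $x_-$ to $x_+$, so $L = d(x_-, x_+) \leq d$.

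\textbf{Gradient estimate.} The central claim is $|\nabla u|^2 \leq \lambda_1(1 - u^2)$ on $M$. I would prove it by applying the maximum principle to the regularized test function $P_\epsilon := |\nabla u|^2/(1 + \epsilon - u^2)$ for $\epsilon > 0$, which is smooth on $M$. At an interior maximum $x_0$ of $P_\epsilon$ (assuming $|\nabla u|(x_0) > 0$, else $P_\epsilon(x_0) = 0$), the condition $\nabla P_\epsilon(x_0) = 0$ gives $\nabla^2 u(\nabla u, \cdot) = -u P_\epsilon \nabla u$; in the orthonormal frame $e_1 = \nabla u/|\nabla u|, e_2, \ldots, e_n$ this reads $u_{11} = -u P_\epsilon$ and $u_{1j} = 0$ for $j \geq 2$. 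Combined with the eigenvalue equation, $\sum_{j \geq 2} u_{jj} = u(P_\epsilon - \lambda_1)$, and Cauchy--Schwarz yields the sharp Hessian bound
\[|\nabla^2 u|^2 \geq u^2 P_\epsilon^2 + \frac{u^2 (P_\epsilon - \lambda_1)^2}{n - 1}.\]
Inserting this, together with $\mathrm{Ric} \geq 0$ and the Bochner formula, into $\Delta P_\epsilon(x_0) \leq 0$ and simplifying (using $\nabla |\nabla u|^2 = P_\epsilon \nabla(1+\epsilon - u^2)$), one reduces the inequality to
\[(P_\epsilon(x_0) - \lambda_1)\left[(1 + \epsilon) P_\epsilon(x_0) + \frac{u^2 (P_\epsilon(x_0) - \lambda_1)}{n-1}\right] \leq 0.\]
If $P_\epsilon(x_0) > \lambda_1$ then both factors are strictly positive, a contradiction; hence $\sup_M P_\epsilon \leq \lambda_1$, and sending $\epsilon \to 0^+$ delivers the gradient estimate.

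\textbf{Integration.} Along $\gamma$, $(u \circ \gamma)'(s) \leq |\nabla u(\gamma(s))| \leq \sqrt{\lambda_1(1 - u^2)}$. Separating variables and integrating over $[0, L]$,
\[\int_{-k}^{1} \frac{du}{\sqrt{1 - u^2}} \leq \sqrt{\lambda_1}\, L \leq \sqrt{\lambda_1}\, d.\]
The left side equals $\tfrac{\pi}{2} + \arcsin k \geq \tfrac{\pi}{2}$, yielding $\lambda_1 \geq \pi^2/(4 d^2)$.

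\textbf{Main obstacle.} The crux is the gradient estimate. The natural unregularized test function $|\nabla u|^2/(1 - u^2)$ is singular at points where $u$ attains $\pm 1$, so an $\epsilon$-regularization (or equivalent barrier) is essential; this is one of the ``singularities of test functions'' that the abstract emphasizes. The second subtle point is sharpness of the Hessian estimate: the naive bound $|\nabla^2 u|^2 \geq (\Delta u)^2/n$ does not suffice to force $P_\epsilon \leq \lambda_1$, and one must exploit the orthogonality relations $u_{1j}(x_0) = 0$ for $j \geq 2$ imposed by $\nabla P_\epsilon(x_0) = 0$. Once these two technical points are dealt with, the remaining geodesic integration is elementary.
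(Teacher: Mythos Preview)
Your proof is correct and is essentially the original Li--Yau argument. Note, however, that the paper does not give its own proof of this theorem: it is quoted purely as historical background in the introduction. What the paper \emph{does} prove is the equivalent gradient estimate in Lemma~\ref{lem-11-5-21-21-16}, and it does so by a route genuinely different from yours.

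You regularize the singular quotient $|\nabla u|^2/(1-u^2)$ by introducing $P_\epsilon=|\nabla u|^2/(1+\epsilon-u^2)$, apply the maximum principle together with the refined Hessian inequality $|\nabla^2 u|^2\geqslant u_{11}^2+\frac{1}{n-1}(\sum_{j\geqslant 2}u_{jj})^2$ (exploiting $u_{1j}(x_0)=0$), and then let $\epsilon\to 0$. The paper instead substitutes $\theta=\arcsin u$ and applies the Bochner formula directly to $|\nabla\theta|^2$; the possible singular set $\Sigma_*=\{u=\pm 1\}$ is handled not by regularization but by the elementary observation \eqref{11-5-21-21-26}--\eqref{11-5-21-21-30} that $|\nabla\theta|^2=\lambda_1$ there already. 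In this formulation the Hessian term $|\nabla^2\theta|^2$ can simply be discarded, so no sharp Hessian splitting is needed, and no limit in $\epsilon$ is taken. This is exactly the ``tackle the singularity of test functions and simplify the calculation'' advertised in the abstract. Your approach is the classical one and carries the advantage of being self-contained in the $u$ variable; the paper's substitution is shorter and sets up the machinery used later (Lemma~\ref{lem-11-5-21-21-50} and Section~5) for the sharper estimates with $K>0$. The geodesic integration step is the same in both.
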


The above result was improved by Li 1982 \cite{Li-1982} to
$\lambda_1\gs \frac{\pi^2}{2d^2}$ in the case when $K=0$. At one time
Li had also conjectured that the first positive eigenvalue should satisfy
\begin{equation}\label{Li-conjectured-estimate}
\lambda_1\gs \frac{\pi^2}{d^2}+(n-1)K.
\end{equation}
This conjecture greatly motivate many related studies in this area.
It is necessary to mention those main results in the following.

Firstly, recall that the well-known Bonnet--Myers Theorem:
\begin{theorem}\label{Bonnet-Myers-thm}
$(\mathbf{Bonnet-Myers})$ Suppose that $M$ is a $n$-dimensional
complete Riemanian manifold with Ricci curvature bounded below by
$(n-1)K$ $(K>0)$. Then $M$ is compact, and its diameter $d(M)$
satisfies the following estimate
\begin{equation}\label{Bonnet-Myers-diameter-estimate}
d(M)\ls \frac{\pi}{\sqrt{K}}.
\end{equation}
\end{theorem}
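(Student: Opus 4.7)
The plan is to apply the second variation of arc length along a minimizing geodesic, exploiting the Ricci lower bound to force an upper bound on the length. I would fix $p,q\in M$ at distance $L=d(p,q)$ and let $\gamma:[0,L]\to M$ be a unit-speed minimizing geodesic between them; existence of such a $\gamma$ is guaranteed by Hopf--Rinow once $M$ is complete. The strategy is to choose explicit test variations whose index forms sum cleanly, then invoke minimality to force the sum to be nonnegative.

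Concretely, I would pick a parallel orthonormal frame $\{e_1,\ldots,e_{n-1}\}$ along $\gamma$ perpendicular to $\gamma'$ and set
\[
V_i(t)=\sin\!\left(\frac{\pi t}{L}\right)e_i(t),\qquad i=1,\ldots,n-1,
\]
which vanish at both endpoints. Since each $e_i$ is parallel, $V_i'=(\pi/L)\cos(\pi t/L)\,e_i$. The second variation formula gives
\[
I(V_i,V_i)=\int_0^L\!\left(|V_i'|^2-\langle R(V_i,\gamma')\gamma',V_i\rangle\right)\dd t,
\]
and summing over $i$, together with the pointwise identity $\sum_i\langle R(V_i,\gamma')\gamma',V_i\rangle=\sin^2(\pi t/L)\,\mathrm{Ric}(\gamma',\gamma')$ and the hypothesis $\mathrm{Ric}\gs(n-1)K$, yields
\[
\sum_{i=1}^{n-1}I(V_i,V_i)\ls (n-1)\int_0^L\!\left[\frac{\pi^2}{L^2}\cos^2\!\left(\frac{\pi t}{L}\right)-K\sin^2\!\left(\frac{\pi t}{L}\right)\right]\dd t=\frac{(n-1)L}{2}\!\left(\frac{\pi^2}{L^2}-K\right),
\]
since both trigonometric integrals evaluate to $L/2$.

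Since $\gamma$ is length-minimizing, each $I(V_i,V_i)\gs 0$ (otherwise some variation would produce a strictly shorter curve from $p$ to $q$), so the left-hand side above is nonnegative. This forces $\pi^2/L^2\gs K$, equivalently $L\ls\pi/\sqrt{K}$. Because $p,q$ were arbitrary, $\mathrm{diam}(M)\ls\pi/\sqrt{K}$, and compactness then follows from Hopf--Rinow, since a complete Riemannian manifold of bounded diameter is necessarily compact.

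The only real subtlety lies in having a globally minimizing geodesic between arbitrary points, which is precisely what completeness supplies via Hopf--Rinow; the computational part is routine, as the test functions $\sin(\pi t/L)$ are tuned so that the trigonometric integrals collapse and the desired inequality drops out in one line. Sharpness (the round sphere realizes equality) is not needed for the stated bound, but would reappear if one later wished to couple this result with the Lichnerowicz estimate \eqref{Lichnerowicz-estimate} to recover the conjectural form \eqref{Li-conjectured-estimate}.
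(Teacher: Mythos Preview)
Your argument is the standard, correct proof of the Bonnet--Myers theorem via the second variation formula and the index form, and I see no gaps in it. Note, however, that the paper does not actually supply a proof of this statement: it is quoted in the introduction as a classical background result (used only to observe that \eqref{Li-conjectured-estimate} together with \eqref{Bonnet-Myers-diameter-estimate} recovers \eqref{Lichnerowicz-estimate}), so there is no in-paper proof to compare against.
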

Combining \eqref{Li-conjectured-estimate} with
\eqref{Bonnet-Myers-diameter-estimate}, we can deduce that
\eqref{Lichnerowicz-estimate} again. So
\eqref{Li-conjectured-estimate} is 
usually regarded as
the sharp lower bound on $\lambda_1$ in terms of diameter for
manifolds with Ricci curvature satisfies \eqref{Ricci-curvature}.
Obviously, the optimal estimate to lower bound of the first
eigenvalue is perfect and powerful. It seems that any further
progress requires a refined gradient estimate which is relevant to
the first eigenfunction.

By sharpening Li--Yau's method and giving a more
delicate estimate, Zhong--Yang 1983 \cite{Zhong-Yang-1983}
improved this to the sharp estimate $\lambda_1\gs
\frac{\pi^2}{d^2}$ in the case when $K=0$.
We now show Zhong--Yang's remarkable result as follows.
\begin{theorem}\label{Zhong-Yang-1983 Main Theorem}
$(\mathbf{Zhong-Yang})$. Let $M$ be a compact Riemannian manifold
without boundary, with nonnegative Ricci curvature and let $d$ be
the diameter of $M$. Then,
\begin{equation}\label{11-5-23-21-02}
\lambda_1\gs \frac{\pi^2}{d^2}.
\end{equation}
\end{theorem}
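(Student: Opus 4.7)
The plan is to follow the Li--Yau gradient-estimate strategy sharpened by Zhong--Yang. Let $u$ be a first eigenfunction, $\Delta u=-\lambda_1 u$ on the closed manifold $M$. Since $u$ has zero mean and therefore changes sign, after a rescaling and translation $u\mapsto au+b$ I may assume $\max u=1$, $\min u=-k$ with $k\in(0,1]$; the transformed function still satisfies an equation of the form $\Delta u+\lambda_1(u-b)=0$ for an explicit constant $b$ determined by $k$. The heart of the proof is the sharp pointwise gradient estimate
\begin{equation*}
|\nabla u|^2 \;\ls\; \lambda_1\bigl[1-u^2 + Z(u)\bigr],
\end{equation*}
in which $Z$ is the Zhong--Yang correction, satisfying $Z(\pm 1)=0$ and an auxiliary ODE chosen so that the estimate is saturated on the one-dimensional model eigenfunction $u(s)=\sin(\pi s/d)$.

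To establish this estimate I would form the test function
\begin{equation*}
P \;:=\; |\nabla u|^2 - \lambda_1\bigl[1-u^2+Z(u)\bigr]
\end{equation*}
and apply the Bochner formula
\begin{equation*}
\tfrac12\Delta|\nabla u|^2 \;=\; |\nabla^2 u|^2 + \langle\nabla u,\nabla\Delta u\rangle + \mathrm{Ric}(\nabla u,\nabla u).
\end{equation*}
Using $\mathrm{Ric}\gs 0$, the Newton inequality $|\nabla^2 u|^2\gs(\Delta u)^2/n$, and $\Delta u=-\lambda_1 u$, one derives a differential inequality for $P$. At an interior maximum $x_0$ of $P$, substituting $\nabla P(x_0)=0$ and $\Delta P(x_0)\ls 0$ reduces everything to a one-variable condition at $y:=u(x_0)\in[-1,1]$. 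The correction $Z$ is designed precisely to solve the associated ODE, so the condition is forced, and hence $P\ls 0$ holds globally on $M$.

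With the gradient estimate in hand, I would pick a minimizing geodesic $\gamma$ joining a minimum of $u$ to a maximum, parametrized by arclength $s$. Since $du/ds\ls|\nabla u|$ along $\gamma$, we get
\begin{equation*}
d \;\gs\; \int_{-k}^{1}\frac{du}{\sqrt{\lambda_1\,(1-u^2+Z(u))}},
\end{equation*}
and an explicit computation using the chosen $Z$ bounds the right-hand side below by $\pi/\sqrt{\lambda_1}$, yielding $\lambda_1\gs\pi^2/d^2$. The asymmetric situation $k<1$ is reduced to the symmetric case $k=1$ by the monotonicity of the integral in $k$ built into the Zhong--Yang $Z$; this is precisely the feature that makes the estimate sharper than the symmetric Li--Yau estimate $\lambda_1\gs \pi^2/(2d^2)$.

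The main obstacle will be the construction and analysis of the correction term $Z$: one must produce a function on $[-1,1]$ that simultaneously annihilates the bad terms in the maximum-principle inequality, remains regular enough at the endpoints, and integrates to give exactly $\pi/\sqrt{\lambda_1}$ in the final step. A secondary technical issue, which the paper's abstract explicitly highlights, is that the natural test function has singularities at points where $u=\pm 1$ and $|\nabla u|$ vanishes; a careful regularization (for instance working with $P_\varepsilon:=|\nabla u|^2-\lambda_1[1-u^2+Z(u)]+\varepsilon$, or with an alternative non-singular form of $P$) is required to apply the maximum principle legitimately there and then pass to the limit.
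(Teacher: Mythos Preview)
This theorem is not proved in the paper; it is merely stated as a known result of Zhong--Yang (1983), and the paper's own arguments address only the refinements in Theorems~1.8, 1.10, and~1.11, moreover under the extra symmetry hypothesis $k=1$ in the closed/Neumann case. So there is no paper proof to compare against directly.

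Your outline is essentially the original Zhong--Yang strategy and is broadly sound, with two caveats. First, your normalization step is muddled: to obtain $\max u=1$, $\min u=-k$ you only rescale (no translation), and $u$ remains a genuine eigenfunction; the shift to a symmetric range $[-1,1]$ is a separate step that produces the inhomogeneous equation $\Delta v=-\lambda_1(v+a)$ with $a=(1-k)/(1+k)$, and it is this parameter $a$ that the correction $Z$ must absorb. Second, the Newton inequality $|\nabla^2 u|^2\gs(\Delta u)^2/n$ is not the relevant Hessian bound here; in this type of argument one either simply drops the Hessian term (as the paper does in its Lemmas~3.2 and~4.1 after the substitution $\theta=\arcsin u$) or uses the critical-point identity $\nabla P=0$ to control the normal--normal second derivative. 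The genuine work, which you correctly flag as the main obstacle, is the construction of the odd correction handling $k<1$. Note that the paper's $\theta$-substitution and its even barrier $z(\theta)=1+\delta\,\xi(\theta)$ deal only with the contribution from positive Ricci curvature and do not touch the asymmetry at all; this is precisely why the paper restricts to $k=1$ and poses the general case as an open question in Section~6.
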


Next, we also remark here that the attempt to prove
the so-called Li's conjecture would unify Yang--Zhong's estimates
with Lichnerowicz's estimate. Several previous
efforts to prove \eqref{Li-conjectured-estimate}
have been made, particularly towards improving
inequalities of the form
\begin{equation}\label{esti-12-9-29-9-55}
\lambda_1\gs \frac{\pi^2}{d^2}+\alpha(n-1)K
\end{equation}
for some constant $\alpha$.
These include works of Yang
\cite{DaGang Yang-1999}, Ling \cite{Jun Ling-2006-IJM}--\cite{Jun Ling-2007-MN}, Ling--Lu \cite{Ling-Lu-2010},
Shi--Zhang \cite{Shi-Zhang-2007}, Qian--Zhang--Zhu \cite{Qian-Zhang-Zhu-2012},
Ni \cite{Lei Ni-1107.2351}, Andrews--Clutterbuck \cite{Andrews-Clutterbuck-2011-JAMS}
and \cite{Andrews-Clutterbuck-1204.5079}, also Andrews--Ni \cite{Andrews-Ni-1111.4967}, etc.
Now we proceed to state briefly some of these wroks in the following.

In a recent paper, following the similar methods in
several previous works, but constructing a more complicated test function,
Yang 1999 \cite{DaGang Yang-1999} has made a certain progress
in Li's conjecture, as shown by the following results.
\begin{theorem}\label{thm-DaGang-Yang-1}
$(\mathbf{Yang})$ Let $M^n$ be a closed Riemannian manifold with
$\mathrm{Ric}(M^n)\gs (n-1)K\gs 0$ and diameter $d$. Then the
first positive eigenvalue $\lambda_1$ on $M$ satisfies the lower
bound
\[
\lambda_1\gs \frac{\pi^2}{d^2}+\frac{(n-1)K}{4}.
\]
\end{theorem}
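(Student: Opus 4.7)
The plan is to adapt the Li--Yau/Zhong--Yang gradient estimate method, using a test function that carries an extra $K$--dependent term. Let $u$ be a first eigenfunction with $\Delta u+\lambda_1 u=0$. After scaling we may assume $\max u=1$ and $\min u=-k$ for some $k\in(0,1]$; the deviation $k<1$ is an asymmetry the argument must track. I would introduce a test function of the form
\[
P(x)=|\nabla u|^2-\Bigl(\lambda_1+\tfrac{(n-1)K}{4}\Bigr)\bigl(1-u^2+c\,\varphi(u)\bigr),
\]
where $c\gs 0$ depends on $k$ and $\varphi$ is a smooth Zhong--Yang type correction on $[-1,1]$ vanishing at $\pm 1$; the goal is to show $P\ls 0$ globally.

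To bound $P$, I would apply the Bochner formula
\[
\tfrac{1}{2}\Delta|\nabla u|^2=|\nabla^2 u|^2+\langle\nabla u,\nabla\Delta u\rangle+\mathrm{Ric}(\nabla u,\nabla u),
\]
insert $\mathrm{Ric}(\nabla u,\nabla u)\gs (n-1)K|\nabla u|^2$, and decompose the Hessian into parts parallel and transverse to $\nabla u$. At a maximum point $x_0$ of $P$, the conditions $\nabla P(x_0)=0$ and $\Delta P(x_0)\ls 0$ collapse to an algebraic inequality among $u(x_0)$, $|\nabla u|^2(x_0)$, $\lambda_1$, and $K$; the function $\varphi$ will be chosen to satisfy a nonlinear ODE that closes this inequality and forces $P(x_0)\ls 0$. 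With the resulting pointwise bound $|\nabla u|^2\ls(\lambda_1+(n-1)K/4)(1-u^2+c\varphi(u))$ in hand, select a minimizing geodesic between a minimum and a maximum of $u$, of length at most $d$, and integrate the separable inequality
\[
\int_{-k}^{1}\frac{du}{\sqrt{1-u^2+c\varphi(u)}}\ls\sqrt{\lambda_1+\tfrac{(n-1)K}{4}}\;d.
\]
A Zhong--Yang type calculation via the substitution $u=\sin\theta$, with $\varphi$ tuned so that the contribution of $c\varphi$ exactly compensates the asymmetry, shows that the left side is bounded below by $\pi$, which rearranges to the claimed estimate $\lambda_1\gs \pi^2/d^2+(n-1)K/4$.

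The principal obstacle is the joint construction of $\varphi$ together with the identification of the coefficient $1/4$ on $(n-1)K$, which is exactly where Yang improves upon Zhong--Yang. The classical Zhong--Yang correction recovers $\pi^2/d^2$ sharply but leaves no room for $K$ because the Ricci term is entirely consumed in controlling the transverse Hessian; Yang's refinement is to arrange the trial function so that a positive fraction of the Ricci contribution survives, and the Cauchy--Schwarz bound on the mixed gradient--Hessian terms forces this fraction to be exactly one quarter. Balancing these quantities at the critical point, while keeping the corrected ODE for $\varphi$ smoothly solvable over $[-1,1]$ for every admissible asymmetry parameter $k$, is the delicate algebraic heart of the argument.
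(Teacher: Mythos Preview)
The paper does not contain a proof of this theorem. Theorem~\ref{thm-DaGang-Yang-1} is stated in the introduction as a background result of Yang \cite{DaGang Yang-1999}, alongside Theorems~\ref{thm-Lichnerowicz}--\ref{thm-DaGang-Yang-2}, purely to situate the paper's own contributions (Theorems~\ref{thm-12-9-26-13a-15}, \ref{thm-Jun-Ling-2007-AGAG-2}, \ref{thm-12-9-27-8-6}) in the literature. There is therefore nothing in this paper to compare your proposal against.

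That said, your sketch is in the right spirit for Yang's actual argument: Bochner formula, maximum principle applied to a well-chosen auxiliary function, a Zhong--Yang type correction to handle the asymmetry $k<1$, and integration along a minimizing geodesic. Two cautions. First, your test function packages the constant $(n-1)K/4$ multiplicatively into the coefficient of $(1-u^2)$; Yang instead builds the $K$-dependence into the \emph{shape} of the barrier (adding a term proportional to $K$ times a specific function of $u$), and the $1/4$ emerges from an optimization at the end rather than being inserted at the start. Second, your final paragraph asserts that ``a Zhong--Yang type calculation \ldots\ shows that the left side is bounded below by $\pi$'' --- this is the entire content of the theorem, and you have not indicated what ODE $\varphi$ must satisfy or why it is solvable, so the proposal as written is a plan rather than a proof.

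For comparison, the present paper's own technique (used for Theorems~\ref{thm-12-9-26-13a-15}, \ref{thm-Jun-Ling-2007-AGAG-2}, \ref{thm-12-9-27-8-6}) makes the substitution $u=\sin\theta$ at the outset and works with $|\nabla\theta|^2$, which removes the singularity of $|\nabla u|^2/(1-u^2)$ and streamlines the barrier argument considerably; see Lemmas~\ref{lem-11-5-21-21-50} and~\ref{lem-12-6-19-7-50}. If you wish to reconstruct Yang's $1/4$ result, that reformulation would likely make the algebra cleaner.
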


\begin{theorem}\label{thm-DaGang-Yang-2}
$(\mathbf{Yang})$ Let $M^n$ be a compact manifold with nonempty
boundary and with $\mathrm{Ric}(M^n)\gs (n-1)K\gs 0$.

$(\textrm{a})$\,\,Assume that the boundary $\p M$ is weakly convex, that is, the
second fundamental form with respect to the outward normal is
nonnegative. Then the first positive Neumann eigenvalue
$\lambda_1$ on $M^n$ satisfies the same lower bound
\eqref{Lichnerowicz-estimate}.

$(\textrm{b})$\,\,Assume that the mean curvature with respect to the outward
normal of the boundary $\p M$ is nonnegative. Then the first
positive Dirichlet eigenvalue $\lambda_1$ on $M^n$ satisfies the
lower bound estimate
\[
\lambda_1\gs \frac{1}{4}\big[\frac{\pi^2}{r^2}+(n-1)K\big].
\]
where $r$ is the inscribed radius for $M^n$.
\end{theorem}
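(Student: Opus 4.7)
The two parts call for quite different techniques. Part (a) is the Neumann analogue of Lichnerowicz--Obata and admits a one-step integrated Bochner argument using a Reilly-type boundary term. Part (b) is of Zhong--Yang type: I will establish a gradient estimate for the first Dirichlet eigenfunction that is saturated by a one-dimensional cosine model on an interval of length $2r$, and then integrate it along a shortest geodesic from a maximum point of the eigenfunction to the boundary.

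For part (a), let $u$ be a first Neumann eigenfunction, $\Delta u=-\lambda_{1}u$, $\p_{\nu}u=0$ on $\p M$. Integrating Bochner's formula
\[
\tfrac{1}{2}\Delta|\nabla u|^{2}=|\nabla^{2}u|^{2}+\langle\nabla u,\nabla\Delta u\rangle+\mathrm{Ric}(\nabla u,\nabla u)
\]
over $M$ produces the boundary integral $\int_{\p M}\nabla^{2}u(\nu,\nabla u)$. Because $\nabla u$ is tangential on $\p M$, differentiating $\langle\nabla u,\nu\rangle=0$ in the direction of $\nabla u$ gives $\nabla^{2}u(\nu,\nabla u)=-S(\nabla u,\nabla u)\ls 0$, so this term can be dropped under $S\gs 0$. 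Combined with the pointwise inequality $|\nabla^{2}u|^{2}\gs(\Delta u)^{2}/n$, the identity $\int(\Delta u)^{2}=\lambda_{1}\int|\nabla u|^{2}$ (integration by parts, using Neumann), and $\mathrm{Ric}\gs(n-1)K$, this yields $\lambda_{1}/n+(n-1)K\ls\lambda_{1}$, i.e.\ $\lambda_{1}\gs nK$.

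For part (b), normalize the first Dirichlet eigenfunction so that $\max_{M}u=1$ with $u>0$ on $M^{\circ}$, and set $\mu=\lambda_{1}-(n-1)K/4$. The target estimate is the gradient bound $|\nabla u|^{2}\ls\mu(1-u^{2})$, which is sharp on the one-dimensional model $u=\cos(\pi x/(2r))$ on $[-r,r]$. To establish it I will apply the maximum principle to the auxiliary function $P=|\nabla u|^{2}-\mu(1-u^{2})$ (or a Ling-type refinement by an extra bounded function of $u$, should the plain choice fail to close the computation). At an interior critical point $\nabla^{2}u(\nabla u)=-\mu u\,\nabla u$, which together with the refined Cauchy--Schwarz bound $|\nabla^{2}u|^{2}\gs\mu^{2}u^{2}+(\Delta u+\mu u)^{2}/(n-1)$, the Ricci bound, and $\Delta P\ls 0$ forces $P\ls 0$ at that point. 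At a boundary point the Dirichlet condition $u=0$ and $\Delta u=0$ on $\p M$ give the tangential-Hessian identity $\nabla^{2}u(\nu,\nu)=(n-1)H\,\p_{\nu}u$, whence $\p_{\nu}P=2(n-1)H(\p_{\nu}u)^{2}\gs 0$; a Hopf-lemma argument using the nonnegativity of $H$ then rules out a nontrivial boundary maximum. Once $P\ls 0$ is secured, set $u=\sin\phi$ with $\phi\in[0,\pi/2]$, so that $|\nabla\phi|^{2}\ls\mu$. Integrating $|\nabla\phi|$ along a shortest geodesic from a point where $u=1$ to the nearest point of $\p M$ (length $\ls r$) gives $\pi/2\ls\sqrt{\mu}\,r$, which rearranges to $\lambda_{1}\gs\tfrac{1}{4}[\pi^{2}/r^{2}+(n-1)K]$.

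\emph{Main obstacle.} The heart of the matter is the gradient estimate in part (b), and in particular the careful choice of the auxiliary function $P$. A naive $P=|\nabla u|^{2}-\mu(1-u^{2})$ just barely fails to close in the $K=0$ case (the interior computation gives an equality rather than a strict inequality against Bochner), so a Ling-type refinement introducing an extra small function of $u$ to create the needed slack is likely required. A second delicate point is ruling out boundary maxima of $P$: the identity $\p_{\nu}P=2(n-1)H(\p_{\nu}u)^{2}$ shows only $\p_{\nu}P\gs 0$, so one must invoke either the strict Hopf lemma on $P$ or iterate with the same refined auxiliary function, exploiting $H\gs 0$ to exclude the boundary case. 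The geodesic-integration step is routine once the gradient estimate is in hand.
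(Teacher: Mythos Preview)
The paper does not prove this theorem; it is quoted from Yang \cite{DaGang Yang-1999} as background in the introduction. What the paper actually proves is the stronger Theorem~\ref{thm-12-9-26-13a-15} (Ling's bound $\lambda_1\gs\pi^2/\tilde d^{\,2}+\tfrac{1}{2}(n-1)K$ with $\tilde d=2r$), via the substitution $\theta=\arcsin u$, the explicit barrier $z(\theta)=1+\delta\,\xi(\theta)$ of Lemma~\ref{lem-12-6-19-7-50} and Corollary~\ref{cor-12-6-19-11-20}, and integration of $|\nabla\theta|/\sqrt{z(\theta)}$ along a shortest segment from a maximum of $u$ to $\p M$.

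Your outline for (a) is the standard Reilly--Escobar argument and is correct. For (b) your strategy (gradient estimate, then integrate over a segment of length $\ls r$) has the right shape and is in fact the same skeleton the paper uses for its sharper Theorem~\ref{thm-12-9-26-13a-15}; the paper simply works with $|\nabla\theta|^2$ instead of $|\nabla u|^2/(1-u^2)$ and replaces your naive $P$ by the barrier $z=1+\delta\,\xi$. Two remarks. First, your boundary computation has the wrong sign: with $u=0$ on $\p M$ and outward normal $e_1$, the argument in Lemma~\ref{lem-12-9-25-20-26}(1) gives $u_{11}=-H\,u_1$, hence $\p_\nu P=2u_1u_{11}=-2H\,u_1^2\ls 0$, not $\gs 0$. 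This is still exactly what is needed: combined with $\p_\nu P\gs 0$ at a boundary maximum it yields $\nabla P(x_0)=0$, after which the interior Bochner computation applies verbatim at $x_0$; no Hopf lemma is required. Second, the genuine gap is the one you already flag: the bare choice $P=|\nabla u|^2-\mu(1-u^2)$ with $\mu=\lambda_1-(n-1)K/4$ does not close by itself, and Yang's proof (like the paper's refinement) hinges on a nontrivial auxiliary function of $u$. In the paper that role is played precisely by $\xi(\theta)$, which solves the linear ODE \eqref{12-6-19-8-8} tailored to produce equality in \eqref{eqn-12-9-23a-19-55}.
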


In order to improve the known results above via the maximum principle method,
one need to construct suitable test functions where detailed technical
work is essential. In more recent two papers, Ling 2006
\cite{Jun Ling-2006-IJM} and 2007 \cite{JunLing-2007-AGAG} give
some new estimates on the lower bound and partially improve the
lower bound above. The main results of those references
are the following three theorems.
\begin{theorem}\label{thm-12-9-26-13a-15}
$(\mathbf{Ling})$ If $(M, g)$ is an n-dimensional compact Riemannian manifold
with boundary. Suppose that Ricci curvature $\mathrm{Ric}(M)$ of $M$ is bounded below
by $(n-1)K$ for some constant $K>0$
\[
\mathrm{Ric}(M)\gs(n-1)K
\]
and that the mean curvature of the boundary $\p M$ with respect to the outward
normal is nonnegative, then the first Dirichlet eigenvalue $\lambda_1$ of the Laplacian
$\Delta$ of $M$ has the following lower bound
\[
\lambda_1\gs\frac{\pi^2}{\tilde{d}^2}+\frac{1}{2}(n-1)K,
\]
where $\tilde{d}$ is the diameter of the largest interior ball in $M$,
that is, $d =2\sup_{x\in M}\{\dist(x,\p M)\}$.
\end{theorem}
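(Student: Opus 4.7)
The plan is a Li--Yau/Zhong--Yang-style sharp gradient estimate for the first Dirichlet eigenfunction combined with an integration along a minimizing geodesic from the interior maximum of $u$ to the nearest boundary point. Let $u$ be a first Dirichlet eigenfunction, $\Delta u+\lambda_1 u=0$ with $u|_{\p M}=0$. Since $M$ is connected and $u$ is a first eigenfunction, $u$ does not change sign, so take $u>0$ in the interior and rescale so $\max_M u=1$, attained at an interior point $x_0$. By definition of $\tilde d$, $\dist(x_0,\p M)\ls\tilde d/2$, so there is a minimizing geodesic $\gamma:[0,L]\to M$ with $\gamma(0)=x_0$, $\gamma(L)\in\p M$ and $L\ls\tilde d/2$.

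Setting $\alpha:=\lambda_1-\tfrac12(n-1)K$, the core analytic claim is the sharp gradient bound
\[
|\nabla u|^2\ls\alpha(1-u^2)\quad\text{on }M,
\]
encoded as nonpositivity of a test function of the form $P=|\nabla u|^2+\alpha u^2-\alpha+(\text{correction})$. The unperturbed $P=|\nabla u|^2+\alpha u^2-\alpha$ already goes most of the way: the Bochner formula together with $\mathrm{Ric}\gs(n-1)K$ and $\Delta u=-\lambda_1 u$ gives
\[
\tfrac12\Delta P\gs|\nabla^2u|^2+\tfrac12(n-1)K\,|\nabla u|^2-\alpha\lambda_1u^2,
\]
and at an interior maximum $x^*$ of $P$, $\nabla P(x^*)=0$ forces $\nabla u$ to be an eigenvector of $\nabla^2u$ with eigenvalue $-\alpha u$; coupled with $\mathrm{tr}\,\nabla^2 u=-\lambda_1u$, Cauchy--Schwarz on the remaining $n-1$ directions yields $|\nabla^2 u|^2\gs\alpha^2u^2+\tfrac{(n-1)K^2}{4}u^2$. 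Substituting into $\Delta P(x^*)\ls 0$ reduces to the partial inequality $|\nabla u|^2\ls(\alpha-K/2)u^2$ at $x^*$, which is not yet $P\ls 0$, so one must add a corrective term of the form $\beta\,z(u)$ -- chosen so that on a one-dimensional model ODE on $[0,\tilde d/2]$ with Dirichlet--Neumann boundary conditions and first eigenvalue exactly $\pi^2/\tilde d^2+\tfrac12(n-1)K$, the combined test function vanishes identically -- to kill the residual positive term.

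The boundary hypothesis enters at the single point where the maximum of (the corrected) $P$ is excluded from $\p M$: using $u|_{\p M}=0$ and the decomposition $\Delta u=u_{\nu\nu}+\Delta^T u-(\mathrm{tr}\,S)u_\nu$ along $\p M$ (where $\nu$ is the outward normal and $S$ the second fundamental form), one computes $\p P/\p\nu$ explicitly in terms of $u_\nu^2$ and $\mathrm{tr}\,S$, and the hypothesis $\mathrm{tr}\,S\gs 0$ provides the sign needed to force the maximum into the interior. Once $P\ls 0$ is established, the conclusion is immediate: along $\gamma$, $|du/ds|\ls|\nabla u|\ls\sqrt\alpha\sqrt{1-u^2}$, hence
\[
\frac\pi2=\int_0^1\frac{du}{\sqrt{1-u^2}}\ls\sqrt\alpha\int_0^L ds\ls\frac{\sqrt\alpha\,\tilde d}{2},
\]
giving $\alpha\gs\pi^2/\tilde d^2$, i.e.\ the asserted bound.

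The main obstacle is the closing step in the gradient estimate: the naive test function leaves a residual positive term at the maximum, so the correct choice of corrective term $\beta\,z(u)$ -- together with a regularization to handle the degeneracy of $P$ where $u=1$ or $|\nabla u|=0$ -- is essential. This is precisely the technical simplification the paper advertises over \cite{Jun Ling-2006-IJM}; the clean way to produce $z$ is to compare with the extremal one-dimensional model, whose eigenfunction prescribes $z(u)$ exactly.
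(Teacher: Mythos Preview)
The central gradient estimate you aim for, $|\nabla u|^2\ls\alpha(1-u^2)$ with $\alpha=\lambda_1-\tfrac12(n-1)K$, is \emph{false} whenever $K>0$, and no choice of additive correction $\beta z(u)$ can rescue it. In the variable $\theta=\arcsin u$ your inequality reads $|\nabla\theta|^2\ls\alpha$. But equation~\eqref{11-5-18-10-58} of the paper (or just the second-order Taylor expansion of $u$ at its interior maximum) forces $|\nabla\theta|^2=\lambda_1$ exactly at $\theta=\pi/2$, and $\lambda_1>\alpha$. So your test function $P=|\nabla u|^2+\alpha u^2-\alpha$ actually tends to $(\lambda_1-\alpha)(1-u^2)>0$ times a factor that vanishes, but the supremum of $P/(1-u^2)$ is strictly positive near the maximum; the estimate cannot be closed by tweaking the maximum-principle argument. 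Your final integration step, which uses $|\nabla u|\ls\sqrt\alpha\sqrt{1-u^2}$ pointwise, therefore has no input.

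What the paper does instead is prove a \emph{$\theta$-dependent} upper bound $|\nabla\theta|^2\ls\lambda_1\,z(\theta)$ with $z(\theta)=1+\delta\,\xi(\theta)$, $\delta=(n-1)K/(2\lambda_1)$, where the explicit function $\xi$ satisfies $\xi(\pm\pi/2)=0$ (so the bound is compatible with $|\nabla\theta|^2=\lambda_1$ at the endpoint) and $\int_0^{\pi/2}\xi=-\pi/2$. The curvature gain is then extracted \emph{only in the integration step}, via Cauchy--Schwarz:
\[
\sqrt{\lambda_1}\,\frac{\tilde d}{2}\ \gs\ \int_0^{\pi/2}\frac{d\theta}{\sqrt{z(\theta)}}\ \gs\ \frac{(\pi/2)^{3/2}}{\big(\int_0^{\pi/2}z\big)^{1/2}}=\frac{(\pi/2)^{3/2}}{\big(\tfrac{\pi}{2}(1-\delta)\big)^{1/2}},
\]
yielding $\lambda_1(1-\delta)\gs\pi^2/\tilde d^2$ and hence the theorem. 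The point is that the constant $1-\delta$ you want is the \emph{average} of $z$ over $[0,\pi/2]$, not a pointwise bound; the correct ``one-dimensional model'' comparison function is the solution $\xi$ of the ODE~\eqref{12-6-19-8-8}, not an eigenfunction with eigenvalue $\alpha$. Your outline of the boundary argument (ruling out a boundary maximum via the mean-curvature hypothesis) and of the geodesic from the interior maximum to the nearest boundary point are both correct and match the paper.
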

\begin{theorem}\label{thm-Jun-Ling-2007-AGAG-1}
$(\mathbf{Ling})$ If M is an n-dimensional, compact Riemannian
manifold that has an empty or nonempty boundary whose second
fundamental form is nonnegative with respect to the outward normal
$($i.e., weakly convex$)$. Suppose that Ricci curvature
$\mathrm{Ric}(M)$ has a lower bound $(n-1)K$ for some constant
$K>0$, that is
\[
\mathrm{Ric}(M)\gs (n-1)K>0.
\]
Then the first non-zero $($closed or Neumann, which applies$)$
eigenvalue $\lambda_1$ of the Lalacian  on $M$ has the following
lower bound
\[
\lambda_1\gs\frac{\pi^2}{d^2}+\frac{3}{8}(n-1)K\quad\hbox{for}\quad n=2
\]
and
\[
\lambda_1\gs\frac{\pi^2}{d^2}+\frac{31}{100}(n-1)K\quad\hbox{for}\quad n\gs 3,
\]
where where $d$ is the diameter of $M$.
\end{theorem}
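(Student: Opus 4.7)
The plan is to follow the gradient-estimate-plus-integration strategy of Li--Yau, Zhong--Yang and Yang, refined by a more careful choice of test function and a more careful use of the Bochner identity. Let $u$ be a first eigenfunction, $\Delta u=-\lambda_1 u$, normalized so that $\max_M u=1$ and $\min_M u=-k$ for some $k\in(0,1]$ (possible because closed/Neumann eigenfunctions change sign). The goal is a pointwise gradient estimate of the form
\[
|\nabla u|^2 \ls \lambda_1(1-u^2) - (n-1)K\,F(u),
\]
where $F$ is an explicit nonnegative function chosen to extract as much information from $\mathrm{Ric}\gs(n-1)K$ as possible, and whose precise shape will ultimately determine the numerical constants $3/8$ and $31/100$.

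The core calculation uses the Bochner identity
\[
\tfrac{1}{2}\Delta|\nabla u|^2 = |\mathrm{Hess}\,u|^2 + \langle\nabla u,\nabla\Delta u\rangle + \mathrm{Ric}(\nabla u,\nabla u),
\]
together with $\Delta u=-\lambda_1 u$, the curvature hypothesis, and the refined Cauchy--Schwarz inequality $|\mathrm{Hess}\,u|^2\gs(\Delta u)^2/n+\frac{n}{n-1}(\text{directional correction})$, whose dependence on $n$ is exactly what separates $n=2$ from $n\gs 3$ in the final constants. Applying the maximum principle to the auxiliary function
\[
P(x) = |\nabla u|^2 + (n-1)K\,F(u) - \lambda_1(1-u^2),
\]
one tries to show $P\ls 0$ globally. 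At an interior maximum of $P$ one has $\nabla P=0$ and $\Delta P\ls 0$, which combined with the Bochner inequality reduces to an ordinary differential inequality on $F$; the optimal $F$ is determined by solving the corresponding extremal ODE, possibly with free parameters that one afterward optimizes. The weak convexity hypothesis on $\p M$ (when nonempty) is used only to exclude a boundary maximum of $P$: indeed $\p_\nu|\nabla u|^2=-2S(\nabla u,\nabla u)\ls 0$ when the second fundamental form $S$ is nonnegative, together with the Neumann condition $\p_\nu u=0$.

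Once the gradient estimate is in hand, one integrates along a unit-speed minimizing geodesic $\gamma\colon[0,L]\to M$ joining a point where $u=-k$ to a point where $u=1$; since $L\ls d$ and $(u\circ\gamma)'\ls|\nabla u|$, one obtains
\[
d \gs \int_{-k}^{1}\frac{du}{\sqrt{\lambda_1(1-u^2)-(n-1)K\,F(u)}}.
\]
Expanding the integrand in powers of $K$ recovers Zhong--Yang's $\lambda_1\gs\pi^2/d^2$ at leading order, and the next-order contribution produces the desired correction $\alpha(n-1)K d^2$ with $\alpha=\alpha(n)$ depending on $F$. The main obstacle will be precisely this last step: producing the sharp constants $3/8$ for $n=2$ and $31/100$ for $n\gs 3$ requires optimizing $F$ together with an odd/asymmetric correction term (as in the Zhong--Yang technique that breaks symmetry when $k\neq 1$) and then carrying out the ensuing case-by-case numerical analysis. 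A secondary technical difficulty, highlighted already in the abstract, is handling the singularity of the natural test function at the endpoints $u=\pm 1$; the plan is to regularize by replacing $1-u^2$ with $1-u^2+\varepsilon$ in the definition of $P$, derive the gradient estimate at fixed $\varepsilon>0$, and then let $\varepsilon\to 0$ in the final integral inequality.
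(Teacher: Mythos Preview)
The paper does \emph{not} contain a proof of this theorem. Theorem~\ref{thm-Jun-Ling-2007-AGAG-1} is stated in the introduction as a known result of Ling \cite{JunLing-2007-AGAG}, and the paper explicitly says that it gives simplified proofs only of Theorems~\ref{thm-12-9-26-13a-15} and~\ref{thm-Jun-Ling-2007-AGAG-2} (and the new Theorem~\ref{thm-12-9-27-8-6}), all of which yield the constant $\alpha=\tfrac{1}{2}$, the latter two only under the additional symmetry hypothesis $k=1$. The asymmetric case with constants $3/8$ and $31/100$ is never treated here; so there is no ``paper's own proof'' to compare against.

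That said, your outline is a faithful sketch of Ling's original argument in \cite{JunLing-2007-AGAG}, and it is worth contrasting it with the methodology the present paper develops for the neighbouring theorems. Two differences stand out. First, the paper avoids your proposed $\varepsilon$-regularization of $1-u^2$ entirely by working in the variable $\theta=\arcsin u$, so that the singular quantity $|\nabla u|^2/(1-u^2)$ becomes simply $|\nabla\theta|^2$; this is precisely the simplification advertised in the abstract. Second, the paper never invokes the dimension-dependent inequality $|\mathrm{Hess}\,u|^2\gs(\Delta u)^2/n$: in the key Lemma~\ref{lem-11-5-21-21-50} the Hessian term $|\nabla^2\theta|^2$ is simply discarded as nonnegative. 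Consequently the paper's machinery, as it stands, cannot distinguish $n=2$ from $n\gs 3$ and cannot handle $k<1$; both of these features are essential to the statement you are trying to prove and would require exactly the extra ingredients you identify (the refined Hessian bound and a Zhong--Yang-type odd correction term). Your plan is therefore the right one for Ling's theorem, but it is genuinely more involved than anything carried out in this paper.
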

\begin{theorem}\label{thm-Jun-Ling-2007-AGAG-2}
$(\mathbf{Ling})$ Under the conditions as in Theorem
\ref{thm-Jun-Ling-2007-AGAG-1}, if the manifold M has the symmetry that
the minimum of the first eigenfunction is the negative of the maximum,
i.e., $k=1$ in \eqref{11-5-16-18-30}, then
the first nonzero (closed or Neumann, which applies)
eigenvalue $\lambda_1$
satisfies \eqref{esti-12-9-29-9-55} with $\alpha=\frac{1}{2}$.
\end{theorem}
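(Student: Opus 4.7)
The plan is to normalize the first eigenfunction $u$ using the $k=1$ symmetry so that $\sup_M u = 1$ and $\inf_M u = -1$, establish the sharp gradient estimate
$$|\nabla u|^2 \ls \mu(1-u^2), \qquad \mu := \lambda_1 - \tfrac{1}{2}(n-1)K,$$
and then integrate along a minimizing geodesic joining the extremal points of $u$ to derive the diameter bound. Once the gradient estimate is in hand, the conclusion follows immediately from $\int_{-1}^{1}(1-u^2)^{-1/2}\,\dd u = \pi$.

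The normalization above is available exactly because of the hypothesis $k=1$ in \eqref{11-5-16-18-30}. I would then introduce the test function
$$P = |\nabla u|^2 + \mu(u^2 - 1)$$
and prove $P \ls 0$ by the maximum principle. If $P$ achieves a positive maximum at $x_0 \in \p M$, the Neumann condition $u_\nu = 0$ combined with the weak convexity $S \gs 0$ yields $P_\nu(x_0) < 0$, contradicting the Hopf boundary lemma. If $x_0$ is interior with $\nabla u(x_0) = 0$, then $P(x_0) = \mu(u(x_0)^2 - 1) \ls 0$, another contradiction. So $x_0$ must be interior with $\nabla u(x_0) \ne 0$.

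At such $x_0$, in an orthonormal frame with $e_1 = \nabla u/|\nabla u|$, the relation $\nabla P(x_0) = 0$ forces $u_{11} = -\mu u$ and $u_{1j} = 0$ for $j \gs 2$. Bochner's identity combined with $\nabla \Delta u = -\lambda_1 \nabla u$, the Ricci bound \eqref{Ricci-curvature}, and the refined Cauchy--Schwarz estimate
$$|\nabla^2 u|^2 \gs u_{11}^2 + \frac{(\Delta u - u_{11})^2}{n-1} = \mu^2 u^2 + \frac{\bigl((n-1)K/2\bigr)^2 u^2}{n-1}$$
then lower-bounds $\tfrac{1}{2}\Delta P(x_0)$. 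The $k=1$ symmetry is decisive here: it eliminates the odd-in-$u$ Zhong--Yang perturbation present in the proof of Theorem \ref{thm-Jun-Ling-2007-AGAG-1}, so the test function may be kept in this simple form. Pairing the lower bound for $\tfrac{1}{2}\Delta P(x_0)$ with the maximum principle inequality $\Delta P(x_0) \ls 0$ under the standing assumption $P(x_0) > 0$ produces a contradiction, hence $P \ls 0$ everywhere.

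Finally, choose points $x_\pm \in M$ where $u = \pm 1$ and a minimizing geodesic $\gamma \colon [0,L] \to M$ from $x_-$ to $x_+$, so $L \ls d$. Setting $v = \arcsin u$ and using the gradient estimate $|\nabla v|^2 \ls \mu$,
$$\pi = \int_{-1}^{1} \frac{\dd u}{\sqrt{1-u^2}} = \int_0^L (v\circ\gamma)'(t)\,\dd t \ls \int_0^L |\nabla v(\gamma(t))|\,\dd t \ls d\sqrt{\mu},$$
and squaring gives $\lambda_1 = \mu + \tfrac{1}{2}(n-1)K \gs \pi^2/d^2 + \tfrac{1}{2}(n-1)K$. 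The chief obstacle is the gradient estimate step: a straightforward execution of the refined Bochner calculation at the maximum of $P$ only yields $|\nabla u|^2 \ls u^2(\lambda_1 - nK/2)$, which falls short of $\mu(1-u^2)$ on part of the admissible range for $u(x_0)^2$. Closing this gap is where the simplification announced in the abstract becomes essential, through a careful limiting treatment of the singular behavior of the naturally arising test quantity $|\nabla u|^2/(1-u^2)$ at the extremum set $\{u = \pm 1\}$.
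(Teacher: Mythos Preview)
Your argument rests on the pointwise gradient bound $|\nabla u|^2 \ls \mu(1-u^2)$ with $\mu=\lambda_1-\tfrac{1}{2}(n-1)K$, and as you admit in the final paragraph, the Bochner computation at a maximum of $P$ does not yield it. This gap is genuine and is not repaired by a ``limiting treatment'' of $|\nabla u|^2/(1-u^2)$ at $\{u=\pm1\}$: in the variable $\theta=\arcsin u$ your bound reads $|\nabla\theta|^2\ls\lambda_1(1-\delta)$ uniformly on $(-\pi/2,\pi/2)$, whereas the barrier the paper actually establishes (Corollary~\ref{11-5-25-22-15}) gives only $|\nabla\theta|^2\ls\lambda_1\bigl(1+\delta\,\xi(\theta)\bigr)$ with $\xi(\pm\pi/2)=0$, so that near the extrema the available bound is $\lambda_1$, strictly larger than your $\mu$. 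Nothing in the maximum-principle machinery forces the stronger uniform bound, and the paper makes no attempt to prove it.

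The paper circumvents the issue entirely by keeping the $\theta$-dependent barrier $z(\theta)=1+\delta\,\xi(\theta)$, where $\xi$ is the explicit function of Lemma~\ref{lem-12-6-19-7-50} solving the ODE \eqref{12-6-19-8-8} and satisfying $\int_{-\pi/2}^{\pi/2}\xi\,\dd\theta=-\pi$. In the integration along the geodesic from $x_-$ to $x_+$ one then applies H\"older's inequality,
\[
\int_{-\pi/2}^{\pi/2}\frac{\dd\theta}{\sqrt{z(\theta)}}\;\gs\;\Bigl(\int_{-\pi/2}^{\pi/2}\dd\theta\Bigr)^{3/2}\Big/\Bigl(\int_{-\pi/2}^{\pi/2}z(\theta)\,\dd\theta\Bigr)^{1/2}=\frac{\pi}{\sqrt{1-\delta}},
\]
so the factor $1-\delta$ enters through the \emph{average} of $z$ rather than its pointwise supremum. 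That averaging step is exactly the idea your proposal is missing: you replaced a non-uniform barrier plus a H\"older inequality by a uniform barrier you cannot establish.
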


However, these are all
entirely updated by the results of Shi--Zhang 2007 \cite{Shi-Zhang-2007},
Qian--Zhang--Zhu \cite{Qian-Zhang-Zhu-2012}, as well as Andrews--Clutterbuck
\cite{Andrews-Clutterbuck-1204.5079}. More precisely, Shi--Zhang
\cite{Shi-Zhang-2007} give the following result via using very differential method.
\begin{theorem}\label{ye-zhang-thm-12-7-13A-17-26}
$(\mathbf{Shi-Zhang})$ Let M be a compact $n$-dimensional
Riemannian manifold without boundary $($or with convex boundary$)$
and $\mathrm{Ric}(M)\gs (n-1)K$. Then its first non-zero
$($Neumann$)$ eigenvalue $\lambda_1(M)$ satisfies
\begin{equation}\label{est-ye-zhang-zhu}
\lambda_1(M)\gs 4s(1-s)\frac{\pi^2}{d^2}+s(n-1)K\quad\hbox{for
all}\quad s\in (0,1),
\end{equation}
where $d$ is the diameter of $M$.
\end{theorem}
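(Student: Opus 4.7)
The plan is to extend the maximum-principle / gradient-comparison argument of Li--Yau and Zhong--Yang (Theorems~\ref{Li-Yau-1980 Main Theorem} and \ref{Zhong-Yang-1983 Main Theorem}) by building the free parameter $s\in(0,1)$ directly into the test function. I would let $u$ be a first nonzero eigenfunction with $\Delta u+\lambda_{1}u=0$ (together with $\partial u/\partial\nu=0$ on $\partial M$ if present), and rescale it so that $\max u=1$ and $\min u=-k$ with $k\in(0,1]$. For fixed $s$, my test function would be
$$P \;=\; |\nabla u|^{2}\;+\;s(n-1)K\,u^{2}\;+\;2b\,u\;+\;c,$$
with the constants $b,c$ to be pinned down below. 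Applying the Bochner--Weitzenb\"ock identity to $|\nabla u|^{2}$, the refined Kato inequality $|\nabla^{2}u|^{2}\geq(\Delta u)^{2}/n$, the eigenvalue equation $\Delta u=-\lambda_{1}u$, and the Ricci lower bound, and then adding $\Delta\bigl(s(n-1)Ku^{2}+2bu\bigr)$, produces a differential inequality of the form
$$\tfrac{1}{2}\Delta P \;+\; \langle X,\nabla P\rangle \;\geq\; \Big(\tfrac{\lambda_{1}^{2}}{n}-s(n-1)K\lambda_{1}\Big)u^{2}\;-\;\bigl(\lambda_{1}-(1+s)(n-1)K\bigr)|\nabla u|^{2}\;-\;b\lambda_{1}u,$$
with a suitable first-order drift $X$ that accommodates the Kato term along $\nabla u$.

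Next I would apply the maximum principle to $P$. At an interior maximum $x_{0}$, $\nabla P(x_{0})=0$ and $\Delta P(x_{0})\leq 0$; if the maximum were attained on $\partial M$, the Neumann condition combined with weak convexity $S\geq 0$ gives $\partial P/\partial\nu=2S(\nabla u,\nabla u)\geq 0$, so Hopf's lemma again forces $\nabla P(x_{0})=0$ at the maximizer. Substituting this into the differential inequality above yields a pointwise gradient bound
$$|\nabla u|^{2}\;\leq\;\Psi_{s}(u)\quad\text{on } M,$$
where $\Psi_{s}$ is an explicit quadratic polynomial in $u$ whose coefficients depend linearly on $\lambda_{1}$, $s$, $(n-1)K$ and on $b,c$. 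I would then pin down $b,c$ so that (i) the two roots of $\Psi_{s}$ are exactly $u=-k$ and $u=1$, and (ii) the leading coefficient of $-\Psi_{s}$ equals $(\lambda_{1}-s(n-1)K)/(4s(1-s))$.

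Finally, I would integrate along a unit-speed minimizing geodesic $\gamma:[0,L]\to M$ of length $L\leq d$ connecting a minimum of $u$ to a maximum. Along $\gamma$ one has $|du/dt|\leq|\nabla u|\leq\sqrt{\Psi_{s}(u)}$, so separating variables,
$$d \;\geq\; L \;\geq\; \int_{-k}^{1}\frac{du}{\sqrt{\Psi_{s}(u)}} \;=\; \pi\sqrt{\frac{4s(1-s)}{\lambda_{1}-s(n-1)K}},$$
the final equality being the standard $\arcsin$ integral that the calibration of $b,c$ was engineered to produce. Rearranging delivers $\lambda_{1}\geq 4s(1-s)\pi^{2}/d^{2}+s(n-1)K$, the asserted bound.

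I expect the main obstacle to lie in the calibration step. Requiring $\Psi_{s}$ to vanish at both $-k$ and $1$ \emph{and} to have the prescribed leading coefficient over-determines the two free parameters whenever $k<1$, so the naive scheme breaks down precisely in the asymmetric regime not already covered by Theorem~\ref{thm-Jun-Ling-2007-AGAG-2}. The fix will be either (a) to replace the crude Kato bound by its sharpened form along the gradient direction, using $\nabla P(x_{0})=0$ to express $\nabla^{2}u(\nabla u,\cdot)$ as a multiple of $\nabla u$ and thereby gaining the missing degree of freedom, or (b) to enlarge the test function by a Zhong--Yang-type correction antisymmetric in $u$ that absorbs the asymmetry of $u$ along $\gamma$. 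This calibration is where the proof is really earned; the surrounding steps are structural.
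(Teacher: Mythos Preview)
The paper does not prove Theorem~\ref{ye-zhang-thm-12-7-13A-17-26}; it is quoted from Shi--Zhang \cite{Shi-Zhang-2007}, and the paper explicitly remarks that their argument proceeds ``via using very differential method'' (i.e., a method very different from the Li--Yau/Zhong--Yang maximum-principle line that the present paper follows for its own results). So there is no in-paper proof to compare against, and what Shi--Zhang actually do---a one-dimensional model comparison in the spirit of Kr\"oger, Bakry--Qian, and Chen--Wang, after which the $s$-family \eqref{est-ye-zhang-zhu} drops out of an elementary estimate on the model eigenvalue---is structurally unlike your test-function scheme.

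On your proposal itself: the gap you flag at the calibration step is real and, in this framework, not merely technical. With only the two parameters $b,c$ you cannot simultaneously force $\Psi_{s}$ to vanish at $-k$ and at $1$ and to carry the leading coefficient $(\lambda_{1}-s(n-1)K)/\bigl(4s(1-s)\bigr)$; the crude Kato term $(\Delta u)^{2}/n$ also injects an unwanted $n$-dependence that does not appear in \eqref{est-ye-zhang-zhu}. Your fix (a) of sharpening Kato along $\nabla u$ recovers at best the $\theta$-variable inequality \eqref{eqn-12-9-23a-19-55} that the paper already derives, and the paper shows exactly how far that goes: it yields $\alpha=\tfrac12$ only under the symmetry hypothesis $k=1$ (Theorems~\ref{thm-Jun-Ling-2007-AGAG-2} and \ref{thm-12-9-27-8-6}), and the asymmetric case is left as the open problem in Section~6. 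Your fix (b), a Zhong--Yang-type antisymmetric correction, is precisely what Yang \cite{DaGang Yang-1999} and Ling \cite{JunLing-2007-AGAG} pursued, and they obtained only $\alpha\leq\tfrac12$, not the full $s$-family. In short, the maximum-principle/test-function route, as you have set it up, is not known to reach \eqref{est-ye-zhang-zhu} without the symmetry assumption; getting the general statement seems to require the one-dimensional comparison machinery that Shi--Zhang invoke.
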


Following the same argument in Shi--Zhang \cite{Shi-Zhang-2007},
Qian--Zhang--Zhu \cite{Qian-Zhang-Zhu-2012} generalize this result
to the case when $M$ is a Alexandrov space.
\begin{theorem}\label{zhang-zhu-thm-12-7-13A-17-26}
$(\mathbf{Qian-Zhang-Zhu})$ Let M be a compact $n(\gs 2)$-dimensional
Alexandrov space without boundary and $\mathrm{Ric}(M)\gs (n-1)K$.
Then its first non-zero eigenvalue $\lambda_1(M)$ satisfies
\eqref{est-ye-zhang-zhu}, where $d$ is the diameter of $M$.
\end{theorem}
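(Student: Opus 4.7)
The plan is to adapt the Shi--Zhang proof of Theorem \ref{ye-zhang-thm-12-7-13A-17-26} to the Alexandrov-space setting, replacing smooth Riemannian tools by their synthetic or weak counterparts. The starting point is the fact that, on a compact $n$-dimensional Alexandrov space $M$ with $\mathrm{Ric}(M)\gs (n-1)K$, one has (i) a self-adjoint Laplacian with discrete spectrum and a Lipschitz first eigenfunction $u$ satisfying $-\Delta u=\lambda_1 u$ weakly (Kuwae--Machigashira--Shioya), and (ii) a distributional Bochner inequality
\[
\tfrac{1}{2}\Delta|\nabla u|^2\gs\tfrac{1}{n}(\Delta u)^2+\langle\nabla u,\nabla\Delta u\rangle+(n-1)K|\nabla u|^2
\]
(Petrunin, Zhang--Zhu). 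After rescaling so that $\max u=1$ and $\min u=-k$ with $k\in(0,1]$, the problem reduces to a sharp $s$-dependent gradient estimate for $u$.

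Following Shi--Zhang, for each parameter $s\in(0,1)$ I would introduce an auxiliary quantity of the form $P_s=|\nabla u|^2+\Psi_s(u)$, where $\Psi_s$ is a polynomial in $u$ whose coefficients depend on $s$, $\lambda_1$, $K$ and $k$. Applying the Bochner inequality to $P_s$ and choosing $\Psi_s$ so that the resulting differential inequality has a favorable sign, the weak maximum principle yields a pointwise gradient bound of the form $|\nabla u|^2\ls A_s-B_s u^2$ throughout $M$, with $A_s,B_s$ sharply tracking the target constants. Integrating $|\nabla u|/\sqrt{A_s-B_s u^2}$ along a minimizing geodesic joining the extrema of $u$ and using that its length is at most $d$ produces, after a routine computation,
\[
\lambda_1\gs 4s(1-s)\frac{\pi^2}{d^2}+s(n-1)K.
\]
Since $s\in(0,1)$ is arbitrary, this establishes \eqref{est-ye-zhang-zhu}.

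The hard part will be the two non-smooth analysis steps. First, the maximum of $P_s$ may be attained on the singular set of $M$, where the tangent cone is not Euclidean and classical Hessian comparison is unavailable; I would resolve this either via heat-semigroup mollification (using that the synthetic $\Gamma_2$ criterion is preserved under the heat flow) or by Petrunin's second-variation viscosity argument, which replaces pointwise Laplacians by second differences of distance functions along quasigeodesics. Second, one must verify that the integration of $|\nabla u|$ along the comparison curve is legitimate: this requires the absolute continuity of $u$ along geodesics in Alexandrov spaces, a classical fact of Petrunin's gradient-flow theory that must be invoked explicitly. These technical adjustments are substantial, but they do not alter the Shi--Zhang strategy itself, and the optimization over $s\in(0,1)$ then recovers \eqref{est-ye-zhang-zhu} in full generality.
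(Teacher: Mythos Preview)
The paper does not contain a proof of this theorem. Theorem~\ref{zhang-zhu-thm-12-7-13A-17-26} is stated in the introduction purely as background: it is attributed to Qian--Zhang--Zhu and cited from \cite{Qian-Zhang-Zhu-2012}, alongside the companion Shi--Zhang result (Theorem~\ref{ye-zhang-thm-12-7-13A-17-26}). The paper's own contributions are the simplified proofs of Theorems~\ref{thm-12-9-26-13a-15}, \ref{thm-Jun-Ling-2007-AGAG-2}, and~\ref{thm-12-9-27-8-6}, all in the smooth Riemannian setting; no argument in the Alexandrov-space category is given or attempted.

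Your sketch is therefore not comparable to anything in the paper. As a standalone outline it is plausible in spirit---the Qian--Zhang--Zhu paper does indeed proceed by transplanting the Shi--Zhang maximum-principle argument via a weak Bochner inequality and a careful treatment of the maximum of the test function on the singular set---but since the present paper neither states nor needs any of that machinery, there is nothing here to check your proposal against.
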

Qian--Zhang--Zhu \cite{Qian-Zhang-Zhu-2012} also gives the following remarks.
\begin{remark}\label{zhang-zhu-remark}
(1) If let $s=\frac{1}{2}$, \eqref{zhang-zhu-thm-12-7-13A-17-26}
becomes \eqref{esti-12-9-29-9-55} with $\alpha=\frac{1}{2}$.
This improves Chen--Wang's result \cite{MuFa Chen-FengYu
Wang-1994}--\cite{MuFa Chen-FengYu Wang-1997} in both $K>0$ and
$K<0$. It also improves Ling's recent results in
\cite{JunLing-2007-AGAG}.

(2) If $K>0$,
Theorem \ref{zhang-zhu-thm-12-7-13A-17-26} implies that ¦Ë
\[
\lambda_1(M)\gs \frac{3}{4}\big[\frac{\pi^2}{d^2}+(n-1)K\big].
\]

(3) If $n\ls 5$ and $K>0$, by choosing some suitable constant $s$,
Qian--Zhang--Zhu \cite{Qian-Zhang-Zhu-2012} also get the following estimate
\[
\lambda_1(M)\gs
\frac{\pi^2}{d^2}+\frac{1}{2}(n-1)K+\frac{(n-1)^2k^2d^2}{16\pi^2}.
\]
\end{remark}

Recently Andrews--Clutterbuck \cite{Andrews-Clutterbuck-1204.5079} also
get \eqref{esti-12-9-29-9-55} with $\alpha=\frac{1}{2}$.
Their contribution is the rather simple proof using
the long-time behavior of the heat equation
which is very likely much easier than the formerly available arguments.
In particular, any trouble
arising in previous works from possible asymmetry of the first eigenfunction
is avoided in their argument. A similar argument proving the sharp lower bound for
$\lambda_1$ on a Bakry--Emery manifold had appeared
in Andrews--Ni's work \cite{Andrews-Ni-1111.4967}.
Meanwhile, Andrews--Clutterbuck \cite{Andrews-Clutterbuck-1204.5079}
also show that the inequality with $\alpha=\frac{1}{2}$ is the best
possible constant of this kind estimate, in other words,
the Li conjecture is false. We refer the interested reader to consult
\cite{Andrews-Clutterbuck-1204.5079} for some details.

Finally, notice that for manifolds with small diameter, Theorem
\ref{thm-DaGang-Yang-1}--\ref{zhang-zhu-thm-12-7-13A-17-26} is better than
the estimate \eqref{Lichnerowicz-estimate}.
Therefore these results generalize Theorem \ref{Zhong-Yang-1983 Main Theorem}. For
more information in this direction, we refer to the
excellent surveys by Ling--Lu \cite{Ling-Lu-2010}, Qian--Zhang--Zhu
\cite{Qian-Zhang-Zhu-2012}, Ni \cite{Lei Ni-1107.2351},
also Andrews--Clutterbuck \cite{Andrews-Clutterbuck-1204.5079}, and so
on, for further results
of eigenvalue estimate and all the relevant references therein.
To sum up, with the rapid development of geometric analysis, eigenvalue
estimate on this stage is getting more and more important.

In present work we give a simple proof of Theorem \ref{thm-12-9-26-13a-15}
and \ref{thm-Jun-Ling-2007-AGAG-2}, and also get the following result.
\begin{theorem}\label{thm-12-9-27-8-6}
Under the assumptions as in Theorem \ref{thm-12-9-26-13a-15}, if the manifold
M has the symmetry that the minimum of the first eigenfunction is the negative
of the maximum, i.e., $k=1$ in \eqref{11-5-16-18-30}, then the first
nonzero Dirichlet eigenvalue $\lambda_1$ of the Laplacian on $M$
satisfies \eqref{esti-12-9-29-9-55} with $\alpha=\frac{1}{2}$.
\end{theorem}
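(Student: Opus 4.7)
I would take $u$ to be a first Dirichlet eigenfunction for $\lambda_1$, with $\Delta u = -\lambda_1 u$ and $u=0$ on $\p M$. Using the symmetry hypothesis $k=1$ in \eqref{11-5-16-18-30}, I normalize so that $\max_M u = 1 = -\min_M u$; since $u$ vanishes on $\p M$, the extrema are attained at interior points $p_\pm$. Writing $u = \sin\theta$ produces a function $\theta : M \to [-\pi/2,\pi/2]$ with $\theta = 0$ on $\p M$ and $\theta(p_\pm) = \pm\pi/2$. The eigenvalue equation translates into
\[
\cos\theta\,\Delta\theta = \sin\theta\,(|\nabla\theta|^2 - \lambda_1),
\]
which is singular where $\theta = \pm\pi/2$, i.e.\ at $p_\pm$.

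\textbf{Gradient estimate.} The core step is the pointwise inequality
\[
Q := |\nabla\theta|^2 + (n-1)K\sin^2\theta \ls \lambda_1 \quad\text{on } M.
\]
I would apply the Bochner formula to $\theta$, substitute $\Delta\theta = \tan\theta\,(|\nabla\theta|^2 - \lambda_1)$ for the third-order term, and examine $Q$ at its maximum. At an interior maximum, $\nabla Q = 0$ lets me eliminate $\nabla|\nabla\theta|^2$; the Ricci bound \eqref{Ricci-curvature} gives $\mathrm{Ric}(\nabla\theta,\nabla\theta) \gs (n-1)K|\nabla\theta|^2$; and the refined Cauchy--Schwarz $|\nabla^2\theta|^2 \gs (\Delta\theta)^2/n$ (sharpened by splitting the Hessian into its $\nabla\theta$-part and its orthogonal complement) allows me to derive a contradiction unless $Q \ls \lambda_1$. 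At a boundary maximum, the Dirichlet condition together with the hypothesis $\mathrm{tr}\,S \gs 0$, through a Reilly/Hopf-type argument, yields $\p Q/\p\nu \ls 0$, ruling out a positive boundary maximum of $Q - \lambda_1$. The singularity at $\theta = \pm\pi/2$ is defused by first proving the inequality on $\{|\theta| \ls \pi/2 - \varepsilon\}$ and then letting $\varepsilon \to 0$, which is precisely the technical simplification advertised in the abstract.

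\textbf{Path argument and conclusion.} I would then pick a rectifiable curve $\sigma : [0,L] \to M$ from $p_+ = \sigma(0)$ to $p_- = \sigma(L)$, parametrized by arclength, of length $L = \dist_M(p_+, p_-) \ls d$, the diameter of $M$. Set $\phi(s) = \theta(\sigma(s))$, so $\phi(0) = \pi/2$, $\phi(L) = -\pi/2$, and by the gradient estimate $|\phi'(s)| \ls |\nabla\theta| \ls \sqrt{\lambda_1 - (n-1)K\sin^2\phi(s)}$. The function
\[
\psi(s) := \int_{\phi(s)}^{\pi/2} \frac{\dd x}{\sqrt{\lambda_1 - (n-1)K\sin^2 x}}
\]
is therefore $1$-Lipschitz, whence
\[
L \gs \psi(L) - \psi(0) = 2\int_0^{\pi/2}\frac{\dd x}{\sqrt{\lambda_1 - (n-1)K\sin^2 x}}.
\]
Because the map $y \mapsto 1/\sqrt{\lambda_1 - (n-1)K y}$ is convex on $[0,1]$ and the average of $\sin^2 x$ on $[0,\pi/2]$ equals $1/2$, Jensen's inequality gives
\[
\int_0^{\pi/2}\frac{\dd x}{\sqrt{\lambda_1-(n-1)K\sin^2 x}} \gs \frac{\pi/2}{\sqrt{\lambda_1 - (n-1)K/2}}.
\]
Combining yields $d \gs \pi/\sqrt{\lambda_1 - (n-1)K/2}$, which rearranges to $\lambda_1 \gs \pi^2/d^2 + (n-1)K/2$, i.e.\ \eqref{esti-12-9-29-9-55} with $\alpha = 1/2$.

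\textbf{Main obstacle.} The hard part is the gradient estimate. The Bochner expansion of $\Delta Q$ produces terms with $\cos^{-2}\theta$ and $\tan\theta$ coefficients whose signs are not manifest, and they must be regrouped via $\nabla Q = 0$ so that the Ricci contribution can absorb them. Simultaneously, on the boundary the normal derivative of $\theta$ does not vanish, so the hypothesis $\mathrm{tr}\,S \gs 0$ has to be used precisely to neutralize the boundary term without sacrificing the sharp coefficient $(n-1)K$ in $Q$. Carrying this through the coordinate singularity at $\theta = \pm\pi/2$ is exactly the delicate point that the paper's simplified handling of singular test functions is meant to circumvent.
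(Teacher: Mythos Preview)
Your overall architecture --- establish a pointwise upper bound for $|\nabla\theta|^2$ in terms of $\theta$, integrate its reciprocal square root along a minimizing curve joining the two extreme points of $u$, and then apply Jensen --- is exactly the paper's. The divergence is in the gradient estimate itself. The paper proves
\[
|\nabla\theta|^2 \ \ls\ \lambda_1\bigl(1+\delta\,\xi(\theta)\bigr),
\qquad \delta=\tfrac{(n-1)K}{2\lambda_1},
\]
with the explicit even function $\xi$ of Lemma~\ref{lem-12-6-19-7-50}, chosen precisely so that it solves the ODE \eqref{12-6-19-8-8} and has $\int_0^{\pi/2}\xi=-\pi/2$; after Jensen this produces $\alpha=\tfrac12$. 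Your claimed bound is instead $|\nabla\theta|^2+(n-1)K\sin^2\theta\ls\lambda_1$, i.e.\ the barrier $z(\theta)=1-2\delta\sin^2\theta$. These two barriers are not comparable: the paper's is tighter near $\theta=0$, yours near $\theta=\pm\pi/2$; after integration both would yield the same $\alpha=\tfrac12$.

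The genuine gap is that your gradient estimate is asserted, not proved, and the sketch does not close. Two concrete obstructions:
\begin{itemize}
\item The paper's barrier machinery (Lemma~\ref{lem-11-5-21-21-50}) relies on condition~(4), $z'(\theta_0)\sin\theta_0>0$, to discard the term $\tfrac{\lambda_1}{2}\tfrac{z'\sin\theta}{\cos\theta}(\lambda_1-|\nabla\theta|^2)$ in \eqref{11-5-21-10-25}. For your $z$ one has $z'(\theta)\sin\theta=-4\delta\sin^2\theta\cos\theta\ls0$, so that term has the wrong sign and cannot be dropped; carrying it along, one checks that the resulting differential inequality is \emph{not} violated near $\theta=\pm\pi/2$, so no contradiction is produced there.
\item Your ``refined Cauchy--Schwarz'' improvement only contributes terms of order $\delta^2$ (since at a touching point $\theta_{11}=\tfrac{\lambda_1}{2}z'=-2\lambda_1\delta\sin\theta\cos\theta$ and $\Delta\theta-\theta_{11}=2\lambda_1\delta\sin\theta\cos2\theta/\cos\theta$), while the deficit to overcome near $\theta=\pm\pi/2$ is of order $\delta$. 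So the Hessian splitting does not rescue the argument for small $\delta$.
\end{itemize}
Consequently, as written, the central inequality $Q\ls\lambda_1$ remains open, and without it the path argument cannot begin. The paper avoids this entirely by working with the barrier $\xi$, whose monotonicity ($\xi'(\theta)\sin\theta\gs0$) is compatible with Lemma~\ref{lem-11-5-21-21-50}.
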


It is a key of this paper to constructing some suitable test function,
even though we mainly use Zhong--Yang's original approach
\cite{Zhong-Yang-1983} in our proof.
Our argument is also based on several
previous works, e.g., Li--Yau \cite{Li-Yau-1980}--\cite{Li-1982},
Zhong--Yang \cite{Zhong-Yang-1983}, Ling \cite{Jun Ling-2006-IJM}
(or \cite{JunLing-2007-AGAG}), and so on. One interesting feature of
our argument is that, it avoids various kinds of trouble from the singularity
of $\abs{\nabla u}^2/(1-u^2)$, which is already present in those references.
%
%
Although with many ways analogous to Ling \cite{Jun Ling-2006-IJM}
(or \cite{JunLing-2007-AGAG}), we can readily
deal with the above singularity, and reduce the
difficulty in calculation to some degree.
Maybe this is another way
to estimate eigenvalues.

The remaining part of this paper is organized as follows.
In Section 2, we start by briefly introducing some terminologies
and notations, that is consistent with \cite{Schoen-Yau-1994},
where the corresponding term are defined in more general setting.
In Section 3, when $\p M\neq\emptyset$, firstly we establish a technique lemma,
which is recognized as another version of Lemma 2.2 in \cite{DaGang Yang-1999}.
With aid of this lemma and the maximum principle, we then establish
a rough estimate of $F(\theta)$ (its definition in \eqref{eqn-11-5-16-22-08} below).
A more precise estimate of $F(\theta)$ is provided at the end of
Section 4 via the method of barrier function.
It turns out that this improved estimate is essential in the
proof of Theorem \ref{thm-12-9-26-13a-15},
\ref{thm-Jun-Ling-2007-AGAG-2} and \ref{thm-12-9-27-8-6}.
Finally, as an application of the above estimate, the proof of all
these theorems mentioned are presented in Section 5. In Section 6,
we bring out an open problem in terms of the first eigenfunction.
May be this problem associated with eigenvalue estimate as well.

\section{Notations and preliminaries}
Let $\{e_1, e_2,\cdots, e_n\}$ be a local orthonormal frame field
on $M$. We adopt the notation that subscripts in $i, j$, and $k$,
with $1\ls i, j, k\ls n$, mean covariant differentiations in the
$e_i, e_j$ and $e_k$ directions respectively.

The Laplacian operator on $M$ in term of local coordinates
associated with the above orthonormal frame, is defined by
differentiating once more in the direction of $e_i$ and summing
over $i=1, 2,\cdots, n$, i. e.,
\[
\Delta u=\sum_i u_{ii}.
\]
Denote by $u$ the normalized eigenfunction with respect to the
first eigenvalue $-\lambda_1$ of $\Delta$. More precisely,
\begin{equation}\label{11-5-16-18-30}
\left\{
\begin{array}{l}
  \Delta u=-\lambda_1 u, \\
  \max\, u=1, \\
  \min\, u=-k,\quad 0<k\ls 1. \\
\end{array}
\right.
\end{equation}

Throughout this paper,
we always set
\[
\theta(x)=\arcsin[u(x)],\quad \forall\,\,x\in M,
\]
and define a subset of
$M$ as follows
\[
\Sigma_*=\big\{x\in M\,\,:\,\,
\theta(x)=\frac{\pi}{2}\quad\hbox{or}\quad
\theta(x)=-\frac{\pi}{2}\quad\hbox{when}\quad k=1\big\}.
\]
Thus
\[
u(x)=\sin[\theta(x)],\quad\forall\,\,x\in M
\]
and
\[
-\arcsin k\ls \theta(x)\ls \frac{\pi}{2},\quad\forall\,\,x\in M.
\]
Above terms shall apply unless otherwise mention.

By \eqref{11-5-16-18-30}, a straight forward calculation
shows that $\theta(x)$ satisfies
\begin{equation}\label{11-5-18-10-58}
    \cos \theta \cdot \Delta \theta-\sin \theta\cdot \abs{\nabla
\theta}^2=-\lambda_1\sin\theta.
\end{equation}
In particular,
\begin{equation}\label{11-5-18-17-10}
\Delta \theta=\frac{\sin\theta}{\cos\theta}\cdot(\abs{\nabla\theta}^2-\lambda_1).
\end{equation}
whenever
$x\in M\setminus \Sigma_*$. From \eqref{11-5-18-10-58}, we easily
know that
\begin{equation}\label{11-5-21-21-26}
\abs{\nabla\theta}^2=\lambda_1\qquad \hbox{as} \quad
\theta=\frac{\pi}{2},
\end{equation}
and when $k=1$,
\begin{equation}\label{11-5-21-21-30}
\abs{\nabla\theta}^2=\lambda_1\qquad \hbox{as} \quad
\theta=-\frac{\pi}{2}.
\end{equation}

We also define a function $F$
as follows
\begin{equation}\label{eqn-11-5-16-22-08}
F(\theta_0)=\max_{x\in M,\,\theta(x)=\theta_0}\abs{\nabla\theta(x)}^2
\end{equation}
for all $\theta_0\in[-\arcsin k,\frac{\pi}{2})$
$\big($ or $(-\frac{\pi}{2},\frac{\pi}{2})$ when $k=1$$\big)$.
Obviously, $F$ is well-defined. Actually, $F(\theta_0)$ is not
something but an extreme value of $f$ with condition
$\theta(x)=\theta_0$. It is very easy to verify that $F(\theta)$
is continuous in $[-\arcsin k,\frac{\pi}{2})$
$\big($ or $(-\frac{\pi}{2},\frac{\pi}{2})$ when $k=1$$\big)$.
Moreover, by \eqref{11-5-21-21-26} and \eqref{11-5-21-21-30}, if we define
\[
F(\frac{\pi}{2})=F(\frac{\pi}{2}-0)=\lambda_1,
\]
and
\[
F(-\frac{\pi}{2})=F(-\frac{\pi}{2}+0)=\lambda_1\qquad\hbox{when}\quad k=1,
\]
then $F(\theta)$ can be extended a continuous function on
$[-\arcsin k,\frac{\pi}{2}]$ $\big($ or $[-\frac{\pi}{2},\frac{\pi}{2}]$ when $k=1$$\big)$.

\section{A rough estimate of $\abs{\nabla\theta}^2$}
Firstly in a similar way owing to \cite{Singer-Wong-Yau-Yau-1985},
\cite{Yu-Zhong-1986}, \cite{Schoen-Yau-1994}, also \cite{DaGang Yang-1999} and
\cite{Jun Ling-2006-IJM} (or \cite{JunLing-2007-AGAG}), we get the following lemma.
Actually, it can be viewed as another version of Lemma 2.2 in \cite{DaGang Yang-1999}.
\begin{lemma}\label{lem-12-9-25-20-26}
Suppose that $\p M\neq\emptyset$. Let $G(\theta)$ be a function defined as follows
\[
G(x)=\frac{1}{2}\abs{\nabla\theta(x)}^2+g[\theta(x)],\quad \forall\,\,x\in M,
\]
where $g(\theta)$ is a smooth function defined on $[-\arcsin k,\frac{\pi}{2}]$.
Then we have the following conclusions:

$(1)$\,\,Assume that the mean curvature $H$ of $\p M$ is nonnegative,
also $u$ satisfies the Dirichlet boundary condition,
and $g'(0)=0$.
If $G(x)$ arrives on its maximum at $x_0\in \p M\setminus \Sigma_*$,
then $\nabla G(x_0)=0$.

$(2)$\,\,Assume that the second fundamental form of $\p M$ is nonnegative
with respect to the outward normal $($i.e., weakly convex$)$,
also $u$ satisfies the Neumann boundary condition.
If $G(x)$ attains its maximum at $x_0\in \p M\setminus \Sigma_*$,
then $\nabla\theta(x_0)=0$. Furthermore,
$\nabla G(x_0)=0$.
\end{lemma}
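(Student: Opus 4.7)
The strategy for both parts is the same: at a boundary maximum $x_0$ of $G$ the tangential derivatives of $G$ vanish automatically, so one only has to control the outward normal derivative. I plan to compute $G_\nu$ on $\p M$ directly from the boundary condition on $u$ and the eigenvalue equation \eqref{11-5-18-10-58}, show that it is $\le 0$, and then use the Hopf boundary point inequality $G_\nu(x_0)\ge 0$ to conclude equality. Throughout, I work in a local orthonormal frame $\{e_1,\dots,e_{n-1},\nu\}$ adapted to $\p M$, with $\nu$ the outward unit normal and $e_\alpha$ tangent to $\p M$; write $h_{\alpha\beta}=\langle\nabla_{e_\alpha}e_\beta,\nu\rangle$ for the second fundamental form, so that $(n-1)H=\sum_\alpha h_{\alpha\alpha}$.

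For part (1), the Dirichlet condition gives $u=0$, hence $\theta=0$ on $\p M$ (note $x_0\notin\Sigma_*$). So every tangential derivative $\theta_\alpha$ vanishes along $\p M$; differentiating this identity tangentially once more, and using the Weingarten relation, produces $\theta_{\alpha\beta}=h_{\alpha\beta}\theta_\nu$ on $\p M$. Restricting \eqref{11-5-18-10-58} to $\p M$ (where $\sin\theta=0$ and $\cos\theta=1$) gives $\Delta\theta=0$, and splitting $\Delta\theta=\sum_\alpha\theta_{\alpha\alpha}+\theta_{\nu\nu}=(n-1)H\theta_\nu+\theta_{\nu\nu}$ forces $\theta_{\nu\nu}=-(n-1)H\theta_\nu$ on $\p M$. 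With $\theta_\alpha=0$ on $\p M$ and $g'(0)=0$, I then find
\[
G_\nu=\sum_i\theta_i\theta_{i\nu}+g'(\theta)\theta_\nu=\theta_\nu\theta_{\nu\nu}=-(n-1)H\theta_\nu^2\le 0
\]
on $\p M$ because $H\ge 0$. Combining with the Hopf inequality $G_\nu(x_0)\ge 0$ yields $G_\nu(x_0)=0$, and together with the vanishing of the tangential derivatives of $G$ at the boundary maximum this gives $\nabla G(x_0)=0$.

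For part (2), the Neumann condition $u_\nu=0$ on $\p M$, together with $\cos\theta(x_0)\ne 0$ (because $x_0\notin\Sigma_*$), implies $\theta_\nu=0$ in a boundary neighbourhood of $x_0$. Differentiating this relation tangentially and using $\nabla_{e_\alpha}\nu=\sum_\beta h_{\alpha\beta}e_\beta$ yields $\theta_{\alpha\nu}=-\sum_\beta h_{\alpha\beta}\theta_\beta$ on $\p M$, whence
\[
G_\nu=\sum_\alpha\theta_\alpha\theta_{\alpha\nu}+g'(\theta)\theta_\nu=-\sum_{\alpha,\beta}h_{\alpha\beta}\theta_\alpha\theta_\beta\le 0
\]
by weak convexity. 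Comparison with Hopf again forces $G_\nu(x_0)=0$ and, crucially, $\sum_{\alpha,\beta}h_{\alpha\beta}\theta_\alpha\theta_\beta(x_0)=0$.

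The main obstacle, and the one place where the argument is delicate, is to upgrade this vanishing, where $h$ is only weakly nonnegative, to the stronger statement $\nabla\theta(x_0)=0$. My plan is to combine the last equality with the vanishing of the tangential derivatives $G_\alpha(x_0)=\sum_\beta\theta_\beta\theta_{\beta\alpha}(x_0)+g'(\theta(x_0))\theta_\alpha(x_0)=0$ at the boundary maximum, and to feed these into the Bochner-type differential inequality for $G$ that will be derived in the following section: either one obtains a direct contradiction to $\nabla^T\theta(x_0)\ne 0$, or the strong Hopf lemma forces $G$ to be constant, in which case the relation $\tfrac12|\nabla\theta|^2+g(\theta)\equiv\mathrm{const}$ combined with $\theta_\nu(x_0)=0$ and the vanishing of $h(\nabla^T\theta,\nabla^T\theta)$ at $x_0$ pins down $\nabla\theta(x_0)=0$. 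Once $\nabla\theta(x_0)=0$ is established, the identity $(\nabla G)_j=\theta_i\theta_{ij}+g'(\theta)\theta_j$ immediately yields $\nabla G(x_0)=0$.
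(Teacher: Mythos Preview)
Your proof of part~(1) is correct and essentially identical to the paper's: restrict to $\partial M$ where $\theta=0$, use $\Delta\theta|_{\partial M}=0$ to compute the normal derivative of $|\nabla\theta|^2$, obtain $G_\nu\le 0$ from $H\ge 0$ and $g'(0)=0$, and squeeze against the boundary-maximum inequality $G_\nu(x_0)\ge 0$.

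For part~(2) your computation again coincides with the paper's through the conclusion $G_\nu(x_0)=0$. Notice that at this point you \emph{already have} $\nabla G(x_0)=0$: the tangential components $G_\alpha(x_0)$ vanish because $x_0$ is a boundary maximum, and you have just shown the normal component vanishes. No further work is required for that conclusion, and it is the only consequence of the lemma actually invoked later (in Lemmas~\ref{lem-11-5-21-21-16} and~\ref{lem-11-5-21-21-50}). Regarding the stronger assertion $\nabla\theta(x_0)=0$: the paper obtains it in one line from $\sum_{\alpha,\beta}h_{\alpha\beta}(x_0)\theta_\alpha(x_0)\theta_\beta(x_0)=0$ together with $(h_{\alpha\beta})\ge 0$, which is precisely the step you flagged as delicate --- and your suspicion is justified, since a merely semidefinite form can vanish on a nonzero vector lying in its kernel. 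Your proposed repair via a downstream Bochner inequality or the strong Hopf lemma is too vague to be a proof and cannot succeed by a purely local argument when $h$ is only weakly positive. The clean resolution is to record $\nabla G(x_0)=0$ immediately after the squeeze and abandon the attempt to derive $\nabla\theta(x_0)=0$, since the latter plays no role downstream.
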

\begin{proof}
Choose a local orthonormal frame $\{e_1, e_2,\cdots, e_n\}$ around 
$x_0$ such that $e_1$ is the unit normal $\p M$ pointing
outward to $M$.
We also denote below by $\frac{\p}{\p x_1}$ the restriction on $\p
M$ of the directional derivative corresponding to $e_1$.

\underline{The proof of (1)}:\,\,Clearly, the maximality of $G(x_0)$ implies that
\begin{equation}\label{eqn-12-9-25-22-15}
G_i(x_0)=0\quad\hbox{for}\quad2\ls i\ls n.
\end{equation}
Since $u$ satisfies the Dirichlet boundary condition, then
\[
\theta|_{\p M}=(\arcsin u)|_{\p M}=0.
\]
Thus $\theta_i(x_0)=0$ for $2\ls i\ls n$.
We also derive from \eqref{11-5-18-17-10} that $(\Delta\theta)|_{\p M}=0$.
Using these results
in the following arguments, we have that at $x_0$
\begin{eqnarray*}
\frac{1}{2}\frac{\p(\abs{\nabla\theta}^2)}{\p x_1}
&=&\sum_{i=1}^n\theta_i\theta_{i1}=\theta_1\theta_{11}
=\theta_1(\Delta\theta-\sum_{i=2}^n\theta_{ii})\\
&=&-\theta_1\sum_{i=2}^n\theta_{ii}
=-\theta_1\sum_{i=2}^n(e_ie_i\theta-\nabla_{e_i}e_i\theta)\\
&=&\theta_1\sum_{i=2}^n\nabla_{e_i}e_i\theta
=\theta_1\sum_{i=2}^n\sum_{j=1}^n(\nabla_{e_i}e_i,e_j)\theta_j\\
&=&\theta_1^2\sum_{i=2}^n(\nabla_{e_i}e_i,e_1)
=-\theta_1^2\sum_{i=2}^n(\nabla_{e_i}e_1,e_i)\\
&=&-\theta_1^2\sum_{i=2}^nh_{ii}=-\theta_1^2H\ls0.
\end{eqnarray*}
Here $h_{ij}$ and $H=\sum_{i=2}^nh_{ii}$ are the second fundamental form
and the mean curvature of $\p M$ relative to $e_1$, respectively.
\begin{eqnarray*}
\frac{\p G}{\p x_1}(x_0)
&=&\frac{1}{2}\frac{\p(\abs{\nabla\theta}^2)}{\p x_1}(x_0)+g'[\theta(x_0)]\cdot\theta_1(x_0)\\
&\ls&g'[\theta(x_0)]\cdot\theta_1(x_0)=g'(0)\cdot\theta_1(x_0)=0.
\end{eqnarray*}
In addition, by the maximality of $G(x)$ at $x_0$, we also have
$\frac{\p G}{\p x_1}(x_0)\gs0$.
Thus
\begin{equation}\label{equ-12-9-25-22-29}
\frac{\p G}{\p x_1}(x_0)=0.
\end{equation}
Combining \eqref{eqn-12-9-25-22-15} with \eqref{equ-12-9-25-22-29}, we can get
\begin{equation*}\label{equ-12-9-25-22-33}
\nabla G(x_0)=0.
\end{equation*}

\vskip5pt

\underline{The proof of (2)}:\,\,By the maximality of $G(x_0)$, we also have
\begin{equation}\label{equ-12-9-27-21-56}
G_i(x_0)=0\qquad\hbox{for}\quad2\ls i\ls n
\end{equation}
and
\begin{equation}\label{11-6-12-1-29}
0\ls \frac{\p G}{\p x_1}(x_0)=\sum_{i=1}^n\theta_i(x_0)\cdot
\theta_{i1}(x_0)+g'[\theta(x_0)]\cdot \theta_1(x_0)
\end{equation}

In addition, since $u$ satisfies the Neumann boundary condition, then
\begin{eqnarray*}
\theta_1=\frac{1}{\sqrt{1-u^2}}\cdot u_1
=\frac{1}{\sqrt{1-u^2}}\cdot\frac{\p u}{\p x_1}=0
\qquad\hbox{on}\quad\p M.
\end{eqnarray*}
Therefore,
\begin{equation}\label{11-6-12-1-35}
\theta_1(x_0)=0.
\end{equation}
Putting \eqref{11-6-12-1-35} into \eqref{11-6-12-1-29}, we then
have
\begin{equation}\label{11-6-12-2-3}
0\ls \frac{\p G}{\p x_1}(x_0)=\sum_{i=2}^n\theta_i(x_0)\cdot
\theta_{i1}(x_0)
\end{equation}

Notice that $\theta_1(x_0)=0$ and recall the definition of second fundamental form
with respect to the outward normal,
one can derive that, for $2\ls i\ls n$
\begin{eqnarray*}\label{eqn-11-6-12-8-38}
\theta_{i1}&=&e_ie_1\theta-(\nabla_{e_i}e_1)\theta
=e_i(\theta_1)-(\nabla_{e_i}e_1,e_j)\theta_j\\
&=&-(\nabla_{e_i}e_1,e_j)\theta_j=-\sum_{j=2}^nh_{ij}\theta_j
\qquad\hbox{at}\,\,x_0,
\end{eqnarray*}
i.e., for $2\ls i\ls n$,
\begin{equation}\label{eqn-11-6-12-8-38}
\theta_{i1}=-\sum_{j=2}^nh_{ij}\theta_j
\qquad\hbox{at}\,\,x_0,
\end{equation}
where $(h_{ij})_{2\ls i, j\ls n}$ is the second fundamental form of $\p M$ relative to $e_1$.
Putting \eqref{eqn-11-6-12-8-38} into \eqref{11-6-12-2-3}, we can get
\begin{equation}\label{equ-12-9-27-22-1}
0\ls \frac{\p G}{\p x_1}(x_0)=-\sum_{i,j=2}^n\theta_i(x_0)h_{ij}(x_0)\theta_j(x_0)\ls 0,
\end{equation}
since $(h_{ij})_{2\ls i, j\ls n}$ is nonnegative (i.e., $\p M$ is weakly convex).
Hence, $\theta_i(x_0)=0$ for $2\ls i\ls n$. By \eqref{11-6-12-1-35}
again, we have $\nabla\theta(x_0)=0$. Finally, $\nabla G(x_0)=0$ follows from
\eqref{equ-12-9-27-21-56} and \eqref{equ-12-9-27-22-1}.

So far we finish the proof of this lemma.
\end{proof}

It was just as Zhong--Yang \cite{Zhong-Yang-1983} had pointed out
that the estimate of the upper bound of $\abs{\nabla\theta}^2$
plays an important role in the estimate of the lower bound for
$\lambda_1$.
In the following we
establish a rough estimate for $\abs{\nabla\theta}^2$.
\begin{lemma}\label{lem-11-5-21-21-16}
Assume that $\mathrm{Ric}(M)\gs0$. The other assumption as in Theorem \ref{thm-Lichnerowicz}.
In any case, the following estimate is valid.
\begin{equation}\label{11-5-16-13-30}
\abs{\nabla\theta(x)}^2\ls \lambda_1,\quad \forall\,\,x\in M.
\end{equation}
Moreover,
\begin{equation}\label{11-5-25-20-26}
F(\theta)\ls \lambda_1.
\end{equation}
\end{lemma}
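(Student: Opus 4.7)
The plan is to apply the maximum principle to $G(x) = \tfrac{1}{2}\abs{\nabla\theta(x)}^2$, which corresponds to $g\equiv 0$ in Lemma \ref{lem-12-9-25-20-26} (so the requirement $g'(0)=0$ is automatic). Writing $f=\abs{\nabla\theta}^2$, the pointwise estimate $f\ls\lambda_1$ on $M$ is equivalent to \eqref{11-5-16-13-30}, and \eqref{11-5-25-20-26} follows at once from the definition of $F$.

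First I would derive an elliptic identity for $f$ on $M\setminus\Sigma_*$ by combining the eigenfunction equation \eqref{11-5-18-17-10}, $\Delta\theta=\tan\theta\,(f-\lambda_1)$, with the Bochner formula
\[
\tfrac{1}{2}\Delta f \;=\; \abs{\nabla^2\theta}^2 + \langle\nabla\theta,\nabla\Delta\theta\rangle + \mathrm{Ric}(\nabla\theta,\nabla\theta).
\]
A direct differentiation gives $\nabla\Delta\theta = \sec^2\theta\,(f-\lambda_1)\,\nabla\theta + \tan\theta\,\nabla f$, whence
\[
\Delta f - 2\tan\theta\,\langle\nabla\theta,\nabla f\rangle \;=\; 2\abs{\nabla^2\theta}^2 + 2\sec^2\theta\,(f-\lambda_1)f + 2\,\mathrm{Ric}(\nabla\theta,\nabla\theta).
\]
Under $\mathrm{Ric}(M)\gs 0$, every right-hand term other than $(f-\lambda_1)f$ is nonnegative.

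Next I would argue by contradiction: suppose $f$ attains a value strictly larger than $\lambda_1$ at some maximizer $x_0\in\overline M$. The possibility $x_0\in\Sigma_*$ is immediately ruled out by \eqref{11-5-21-21-26}–\eqref{11-5-21-21-30}. If $x_0$ lies in the interior of $M\setminus\Sigma_*$, then $\nabla f(x_0)=0$ and $\Delta f(x_0)\ls 0$, and the identity collapses to $0\gs 2\sec^2\theta(x_0)(f(x_0)-\lambda_1)f(x_0)>0$, a contradiction. If $x_0\in\p M\setminus\Sigma_*$ under the Neumann hypothesis, Lemma \ref{lem-12-9-25-20-26}(2) gives $\nabla\theta(x_0)=0$, hence $f(x_0)=0<\lambda_1$, again a contradiction.

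The main obstacle is the Dirichlet boundary case, because at a boundary maximum $\Delta f(x_0)\ls 0$ is not automatic. Here Lemma \ref{lem-12-9-25-20-26}(1) applied with $g\equiv 0$ yields $\nabla G(x_0)=0$, i.e.\ $\nabla f(x_0)=0$ in every direction, including the outward normal. Since any geodesic of $\p M$ through $x_0$ has its $M$-acceleration purely normal to $\p M$, the vanishing of $\nabla f(x_0)$ cancels the acceleration correction; testing $f$ along boundary geodesics in $\p M$ and along an inward-pointing geodesic in $M$ then gives $\nabla^2 f(e_i,e_i)(x_0)\ls 0$ for each vector of an orthonormal frame whose first element is the outward normal, and summing produces $\Delta f(x_0)\ls 0$. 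Because $\theta(x_0)=0$ forces $\tan\theta(x_0)=0$ and $\sec^2\theta(x_0)=1$, the identity at $x_0$ reads $\Delta f(x_0)\gs 2(f(x_0)-\lambda_1)f(x_0)>0$, contradicting $\Delta f(x_0)\ls 0$. Since every case is ruled out, \eqref{11-5-16-13-30} holds, and \eqref{11-5-25-20-26} follows.
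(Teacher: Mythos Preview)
Your proposal is correct and follows essentially the same route as the paper: apply Lemma~\ref{lem-12-9-25-20-26} with $g\equiv 0$ to force $\nabla(\abs{\nabla\theta}^2)=0$ at the maximizer, then combine the Bochner identity with \eqref{11-5-18-17-10} and the sign of $\mathrm{Ric}$ to conclude. Two minor differences are worth noting: (i) the paper simply \emph{asserts} $\Delta(\abs{\nabla\theta}^2)\ls 0$ at a boundary maximum ``by the maximum principle again,'' whereas you actually justify this by testing along boundary geodesics and the inward normal geodesic, using $\nabla f(x_0)=0$ to kill the second-fundamental-form correction --- your extra paragraph is a genuine clarification of a step the paper glosses over; (ii) in the Neumann case you take the shortcut $\nabla\theta(x_0)=0\Rightarrow f(x_0)=0$, while the paper treats Neumann and Dirichlet uniformly via $\nabla G(x_0)=0$. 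Both variants are fine.
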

\begin{proof}
Suppose that $\abs{\nabla\theta}^2$ attains its local maximum
at $x_0$. Clearly, \eqref{11-5-21-21-26} and \eqref{11-5-21-21-30}
imply that \eqref{11-5-16-13-30} holds in the case: $x_0\in
\Sigma_*$. Without loss of generality, we may assume further that
$x_0\in M\setminus\Sigma_*$ in the rest of the proof, thus
$\theta_0=\theta(x_0) \in [-\arcsin k,\frac{\pi}{2})$ (or
$(-\frac{\pi}{2},\frac{\pi}{2})$ when $k=1$). In the case of $\p M\neq\emptyset$,
with aid of Lemma \ref{lem-12-9-25-20-26},
we conclude that if $\abs{\nabla\theta}^2$ arrive its maximum at $x_0\in M$,
then
\begin{equation}\label{eqn-12-9-26-20-3}
\nabla(\abs{\nabla\theta}^2)=0\qquad\hbox{at}\quad x_0.
\end{equation}
no matter $x_0\in\p M\setminus\Sigma_*$ or $x_0\in M\setminus(\p M\cup\Sigma_*)$.
According to 
the maximum principle again, we easily show that
\begin{equation}\label{11-5-18-16-38}
 \Delta (\abs{\nabla\theta}^2)\ls 0\qquad\hbox{at}\quad x_0.
\end{equation}
Applying the Bochner formula to $\theta$, we have
\begin{equation}\label{11-5-18-12-00}
  \frac{1}{2}\,\Delta (\abs{\nabla
\theta}^2)=
  \abs{\nabla^2\theta}^2+\nabla\theta\cdot \nabla (\Delta
  \theta)+\textrm{Ric}(\nabla\theta,\nabla\theta),
\end{equation}
where $\textrm{Ric}(\nabla\theta,\nabla\theta)$ is the Ricci
curvature along 
$\nabla\theta$. Substituting \eqref{11-5-18-17-10} into
\eqref{11-5-18-12-00}, we have
\begin{eqnarray}\label{11-6-14-3-1}
\frac{1}{2}\,\Delta (\abs{\nabla\theta}^2) &=&
\abs{\nabla^2\theta}^2+\nabla\theta\cdot \nabla \big[\frac{\sin
\theta}{\cos \theta}\cdot (\abs{\nabla\theta}^2-\lambda_1)\big]
+\mathrm{Ric}(\nabla\theta,\nabla\theta) \nonumber\\
&=& \abs{\nabla^2\theta}^2+\nabla\theta\cdot
\nabla \big(\frac{\sin \theta}{\cos \theta}\big)
\cdot(\abs{\nabla\theta}^2-\lambda_1) \\
&&+\nabla\theta\cdot \frac{\sin \theta}{\cos \theta}\cdot
\nabla(\abs{\nabla\theta}^2)+\mathrm{Ric}(\nabla\theta,\nabla\theta).
\nonumber
\end{eqnarray}
A direct calculation leads to that
\begin{equation}\label{11-5-18-16-20}
    \nabla \big(\frac{\sin \theta}{\cos \theta}\big)
=\frac{\nabla(\sin \theta)\cdot \cos \theta-\sin \theta \cdot
\nabla (\cos \theta)}{\cos^2\theta}=\frac{1}{\cos^2\theta}\cdot\nabla\theta,
\end{equation}
Putting \eqref{11-5-18-16-20} into \eqref{11-6-14-3-1}, we obtain
\begin{eqnarray}\label{11-5-18-16-30}
   \frac{1}{2}\,\Delta (\abs{\nabla\theta}^2) &=& \abs{\nabla^2\theta}^2
   +\frac{1}{\cos^2\theta}\cdot\abs{\nabla\theta}^2(\abs{\nabla\theta}^2-\lambda_1) \nonumber\\
   &&+\nabla\theta\cdot \frac{\sin \theta}{\cos \theta}\cdot \nabla (\abs{\nabla
   \theta}^2)+\mathrm{Ric}(\nabla\theta,\nabla\theta).
\end{eqnarray}
By virtue of \eqref{eqn-12-9-26-20-3}--\eqref{11-5-18-16-38}, we deduce from \eqref{11-5-18-16-30} that
at $x_0$
\[
0\gs\abs{\nabla^2\theta}^2+\frac{1}{\cos^2\theta}
\cdot\abs{\nabla\theta}^2(\abs{\nabla\theta}^2-\lambda_1)
+\mathrm{Ric}(\nabla\theta,\nabla\theta).
\]
Since $\mathrm{Ric}(M)\gs0$, the first term and the third term above
can be taken away
since they are nonnegative. Thus we obtain at $x_0$
\[
0\gs \frac{1}{\cos^2\theta}\cdot\abs{\nabla\theta}^2(\abs{\nabla\theta}^2-\lambda_1).
\]
Dividing by $\abs{\nabla\theta}^2$ and multiplying by
$\cos^2\theta$ successively, it follows that at $x_0$
\[
0\gs \abs{\nabla\theta}^2-\lambda_1.
\]
Hence we have
\[
\abs{\nabla\theta(x_0)}^2\ls \lambda_1,
\]
which implies the conclusion.
\end{proof}

\section{The estimate of $F(\theta)$}
Now we are trying to
get a more precise estimate on $F(\theta)$ than
Lemma \ref{lem-11-5-21-21-16}. For this purpose, let us introduce the function
$Z(\theta): [-\arcsin k,\frac{\pi}{2}]\mapsto\mathbb{R}$ such that
\begin{equation}\label{11-5-25-22-01}
F(\theta)=\lambda_1Z(\theta).
\end{equation}
By Lemma \ref{lem-11-5-21-21-16}, it is also easy to see that $0\ls Z(\theta)\ls1$.
From now on we denote
\[
\delta=\frac{(n-1)K}{2\lambda_1}.
\]
It follows from \eqref{Lichnerowicz-estimate} that
\[
0<\delta\ls\frac{n-1}{2n}<\frac{1}{2}.
\]

\begin{lemma}\label{lem-11-5-21-21-50}
Assume that $\mathrm{Ric}(M)\gs(n-1)K$ and the other conditions as in Theorem \ref{thm-Lichnerowicz}.
If the function $z: [-arc\sin k,\frac{\pi}{2}]\mapsto \mathbb{R}$ satisfies the following properties:

(1)\,\,$z(\theta)\gs Z(\theta)$;

(2)\,\,there exists some $\theta_0\in [-arc\sin k,\frac{\pi}{2})$
$\big($or $(-\frac{\pi}{2},\frac{\pi}{2})$ when $k=1$$\big)$, such that $z(\theta_0)=Z(\theta_0)$;

(3)\,\,$z'(0)=0$;

(4)\,\,$z'(\theta_0)\sin\theta_0>0$.

Then the following estimate holds
\begin{equation}\label{11-5-21-22-10}
z(\theta_0)\ls1-\cos\theta_0\sin \theta_0\cdot z'(\theta_0)
+\frac{\cos^2\theta_0}{2}\cdot
z''(\theta_0)-2\delta\cos^2\theta_0.
\end{equation}
\end{lemma}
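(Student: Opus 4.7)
The idea is to view (\ref{11-5-21-22-10}) as the second-order condition at a maximum point of the auxiliary function
\[
G(x):=\tfrac12\abs{\nabla\theta(x)}^2-\tfrac{\lambda_1}{2}\,z(\theta(x)).
\]
Hypothesis (1) gives $\abs{\nabla\theta(x)}^2\ls F(\theta(x))\ls \lambda_1 z(\theta(x))$, so $G\ls 0$ on $M$. Hypothesis (2) furnishes $x_0\in M$ with $\theta(x_0)=\theta_0$ and $\abs{\nabla\theta(x_0)}^2=F(\theta_0)=\lambda_1 z(\theta_0)$, so $G(x_0)=0$; hence $x_0$ is a global maximizer of $G$.

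At $x_0$ I need $\nabla G(x_0)=0$ and $\Delta G(x_0)\ls 0$. If $x_0\in M\setminus\p M$ this is standard. If $x_0\in\p M$, I invoke Lemma~\ref{lem-12-9-25-20-26} with $g(\theta)=-\tfrac{\lambda_1}{2}z(\theta)$: hypothesis (3) ensures $g'(0)=0$, so part~(1) handles the Dirichlet case and part~(2) handles the Neumann case, both yielding $\nabla G(x_0)=0$. A Taylor expansion along inward and tangential directions (and the symmetry $v\mapsto -v$ that sends outward to inward directions) then forces $\Delta G(x_0)\ls 0$ even when $x_0\in\p M$.

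The computation at $x_0$ proceeds from the Bochner identity (\ref{11-5-18-16-30}) applied to $\theta$, combined with $\Delta(z\circ\theta)=z'(\theta)\Delta\theta+z''(\theta)\abs{\nabla\theta}^2$. Substituting $\Delta\theta=\tfrac{\sin\theta}{\cos\theta}(\abs{\nabla\theta}^2-\lambda_1)$ from (\ref{11-5-18-17-10}), using $\nabla G(x_0)=0$ to replace $\nabla(\abs{\nabla\theta}^2)$ by $\lambda_1 z'(\theta)\nabla\theta$ at $x_0$, inserting $\abs{\nabla\theta(x_0)}^2=\lambda_1 z(\theta_0)$, discarding the nonnegative Hessian norm $\abs{\nabla^2\theta}^2$, and using $\mathrm{Ric}\gs (n-1)K=2\delta\lambda_1$, the inequality $\Delta G(x_0)\ls 0$ reduces (after division by the positive quantity $\lambda_1^2 z(\theta_0)$) to
\[
0\gs \frac{z-1}{\cos^2\theta_0}+\frac{z'\sin\theta_0}{\cos\theta_0}+2\delta-\frac{z''}{2}-\frac{z'(z-1)\sin\theta_0}{2z\cos\theta_0},
\]
with $z,z',z''$ evaluated at $\theta_0$. (The strict positivity of $z(\theta_0)$ needed for the division is automatic: if $z(\theta_0)=0$, a parallel calculation gives $\Delta G(x_0)=\tfrac{\lambda_1^2 z'(\theta_0)\sin\theta_0}{2\cos\theta_0}>0$ by hypothesis (4), contradicting $\Delta G(x_0)\ls 0$.)

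The last summand is the crucial cross term. Since $z\ls 1$ by Lemma~\ref{lem-11-5-21-21-16}, one has $z-1\ls 0$; combined with $z'(\theta_0)\sin\theta_0>0$ from hypothesis~(4), this makes $-\tfrac{z'(z-1)\sin\theta_0}{2z\cos\theta_0}$ nonnegative, so the inequality is preserved when this summand is dropped. Multiplying what remains by $2\cos^2\theta_0$ and rearranging produces exactly (\ref{11-5-21-22-10}). The main obstacle is this cross term, which arises from the $\tan\theta$ factor in $\Delta\theta$: without hypothesis~(4) its sign is indeterminate and the estimate fails. The role of (4) in the hypotheses is precisely to absorb this remainder; everything else is a careful bookkeeping of the Bochner formula together with the barrier condition $\nabla G(x_0)=0$.
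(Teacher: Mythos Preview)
Your proof is correct and follows essentially the same route as the paper: the same auxiliary function $G=f$, the same appeal to Lemma~\ref{lem-12-9-25-20-26} at a boundary maximum (using hypothesis~(3)), the same Bochner computation, and the same use of hypothesis~(4) together with $Z\ls 1$ to discard the cross term. The only cosmetic difference is that the paper drops the term $\tfrac{\lambda_1}{2}\tfrac{z'\sin\theta}{\cos\theta}(\lambda_1-\abs{\nabla\theta}^2)$ \emph{before} substituting $\abs{\nabla\theta(x_0)}^2=\lambda_1 z(\theta_0)$ and then divides by $\lambda_1\abs{\nabla\theta}^2$, whereas you substitute first and divide by $\lambda_1^2 z(\theta_0)$; the two orderings are equivalent, and your extra care in ruling out $z(\theta_0)=0$ is a point the paper leaves implicit.
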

\begin{proof}
Set
\[
f(x)=\frac{1}{2}\,\Big\{\abs{\nabla\theta(x)}^2-\lambda_1z\big[\theta(x)\big]\Big\}.
\]
Obviously, $f(x)\ls 0$ for all $x\in M$.
By \eqref{eqn-11-5-16-22-08}, we know that there exists some $x_0\in
M\setminus\Sigma_*$ such that $\theta(x_0)=\theta_0$ and
$F(\theta_0)=\abs{\nabla\theta(x_0)}^2$.
Thus $f$ achieves its maximum $0$
at $x_0$, i. e.,
\begin{equation}\label{11-5-22-9-33}
\abs{\nabla\theta(x_0)}^2=\lambda_1Z(\theta_0)=\lambda_1z(\theta_0).
\end{equation}
By the same reason as in the proof of lemma \ref{lem-11-5-21-21-16},
we always have
\begin{equation}\label{eqn-12-9-26-20-18}
\nabla f(x_0)=0,
\end{equation}
no matter $x_0\in\p M\setminus\Sigma_*$ or $x_0\in M\setminus(\p M\cup\Sigma_*)$.
It is obvious that
\begin{equation}\label{11-5-21-22-35}
\Delta f(x_0)\ls0.
\end{equation}
by the maximum principle again. Direct computation shows that
\[
f_j=\sum_i\theta_i\cdot \theta_{ij}-\frac{\lambda_1}{2}z'(\theta)\cdot \theta_j,
\]
that is
\begin{equation*}\label{11-6-19-14-30}
\nabla f=\frac{1}{2}\,[\nabla (\abs{\nabla\theta}^2)-\lambda_1
z'(\theta)\cdot \nabla\theta] =\nabla\theta\cdot \nabla^2
\theta-\frac{\lambda_1}{2}z'(\theta)\cdot\nabla\theta.
\end{equation*}

Since $\nabla f=0$ at $x_0$,
\begin{equation}\label{11-5-22-8-35}
\nabla (\abs{\nabla\theta}^2)=2\nabla\theta\cdot \nabla^2
\theta=\lambda_1z'(\theta_0)\cdot \nabla\theta\qquad
\hbox{at}\quad x_0.
\end{equation}

By directly calculating and applying \eqref{11-5-18-17-10}, we
also obtain
\begin{eqnarray}\label{11-5-21-23-58}
  \frac{\lambda_1}{2}\,\Delta z
&=& \frac{\lambda_1}{2}\,\sum_jz_{jj}=\frac{\lambda_1}{2} \sum_j
\big(z'\cdot \theta_j\big)_j \nonumber\\
  &=& \frac{\lambda_1}{2} \sum_j (z''\cdot \theta_j^2+z'\cdot \theta_{jj})
  =\frac{\lambda_1}{2}(z''\cdot \abs{\nabla\theta}^2+z'\cdot \Delta
  \theta) \\
  &=& \frac{\lambda_1}{2}\Big[z''\cdot \abs{\nabla\theta}^2
+z'\cdot\frac{\sin\theta}{\cos \theta}\cdot(\abs{\nabla\theta}^2-\lambda_1)\Big]. \nonumber
\end{eqnarray}
Combining \eqref{11-5-18-16-30} with \eqref{11-5-21-23-58}, we
hence obtain
\begin{eqnarray*}
\Delta f&=&\abs{\nabla^2\theta}^2
+\frac{1}{\cos^2\theta}\cdot\abs{\nabla\theta}^2(\abs{\nabla\theta}^2-\lambda_1) \nonumber\\
&&+\nabla\theta\cdot \frac{\sin \theta}{\cos \theta}\cdot \nabla (\abs{\nabla
\theta}^2)+\mathrm{Ric}(\nabla\theta,\nabla\theta)\\
&&-\frac{\lambda_1}{2}\Big[z''\cdot \abs{\nabla\theta}^2
+z'\cdot\frac{\sin\theta}{\cos \theta}\cdot(\abs{\nabla\theta}^2-\lambda_1)\Big]. \nonumber
\end{eqnarray*}
Recall that $\mathrm{Ric}(\nabla\theta,\nabla\theta)\gs (n-1)K\abs{\nabla\theta}^2$, we can get
\begin{eqnarray}\label{12-6-18-20-55}
\Delta f &=& \abs{\nabla^2\theta}^2
+\frac{1}{\cos^2\theta}\cdot\abs{\nabla\theta}^2(\abs{\nabla\theta}^2-\lambda_1) \nonumber\\
&&+\nabla\theta\cdot \frac{\sin \theta}{\cos \theta}\cdot \nabla (\abs{\nabla
\theta}^2)+(n-1)K\abs{\nabla\theta}^2 \\
&&-\frac{\lambda_1}{2}\Big[z''\cdot \abs{\nabla\theta}^2
+z'\cdot\frac{\sin\theta}{\cos \theta}\cdot(\abs{\nabla\theta}^2-\lambda_1)\Big]. \nonumber
\end{eqnarray}
Substituting \eqref{11-5-22-8-35} into \eqref{12-6-18-20-55}, it is
easy to deduce that at $x_0$
\begin{eqnarray}\label{11-5-21-9-22}
   \Delta f &=& \abs{\nabla^2\theta}^2
   +\frac{1}{\cos^2\theta}\cdot\abs{\nabla\theta}^2(\abs{\nabla\theta}^2-\lambda_1)\nonumber\\
   &&+\lambda_1z'\cdot \frac{\sin \theta}{\cos \theta}\cdot \abs{\nabla
   \theta}^2+(n-1)K\abs{\nabla\theta}^2\\
   &&-\frac{\lambda_1}{2}\Big[z''\cdot \abs{\nabla\theta}^2
   +z'\cdot\frac{\sin\theta}{\cos \theta}\cdot(\abs{\nabla\theta}^2-\lambda_1)\Big].\nonumber
\end{eqnarray}
By virtue of \eqref{11-5-21-22-35}, we
derive from \eqref{11-5-21-9-22} that at $x_0$
\begin{eqnarray}\label{11-5-21-10-25}
   0 &\gs& \abs{\nabla^2\theta}^2
   +\frac{1}{\cos^2\theta}\cdot\abs{\nabla\theta}^2(\abs{\nabla\theta}^2-\lambda_1)\nonumber\\
   &&+\lambda_1z'\cdot \frac{\sin \theta}{\cos \theta}\cdot \abs{\nabla
   \theta}^2+(n-1)K\abs{\nabla\theta}^2\\
   &&-\frac{\lambda_1}{2}z''\cdot \abs{\nabla\theta}^2
   +\frac{\lambda_1}{2}\cdot\frac{z'\sin\theta}{\cos\theta}
   \cdot(\lambda_1-\abs{\nabla\theta}^2).\nonumber
\end{eqnarray}
Obviously, condition (4) in this theorem and \eqref{11-5-16-13-30} imply that
the last term in \eqref{11-5-21-10-25} is nonnegative.
Thus the first term and the last term above
can be discarded
since they are nonnegative. We thus
obtain that at $x_0$
\begin{eqnarray*}
0&\gs&\frac{1}{\cos^2\theta}\cdot\abs{\nabla\theta}^2(\abs{\nabla\theta}^2-\lambda_1)
+\lambda_1z'\cdot\frac{\sin \theta}{\cos \theta}\cdot\abs{\nabla\theta}^2\nonumber\\
&&+(n-1)K\abs{\nabla\theta}^2-\frac{\lambda_1}{2}z''\cdot \abs{\nabla\theta}^2.\nonumber
\end{eqnarray*}
After dividing by $\lambda_1\abs{\nabla\theta}^2$, multiplying by $\cos^2\theta$
and rearranging the terms successively, we are led to at $x_0$
\begin{eqnarray}\label{11-5-22-12-01}
0&\gs&\frac{\abs{\nabla\theta}^2}{\lambda_1}-1
+z'\cdot\cos\theta\sin\theta
-\frac{1}{2}z''\cos^2\theta+2\delta\cos^2\theta.
\end{eqnarray}
Therefore, using \eqref{11-5-22-9-33} we get at $x_0$
\begin{eqnarray}\label{eqn-12-9-23a-19-55}
0&\gs&z-1+z'\cdot\cos\theta\sin\theta
-\frac{1}{2}z''\cos^2\theta+2\delta\cos^2\theta.
\end{eqnarray}
from which \eqref{11-5-21-22-10} follows easily. The proof is complete.
\end{proof}

We would like to point out that the remaining part of the
present paper works exactly as in \cite{Zhong-Yang-1983} (or
\cite{Schoen-Yau-1994}) and \cite{Jun Ling-2006-IJM} (or
\cite{JunLing-2007-AGAG}). For the completeness we briefly give a
proof of Theorem \ref{thm-12-9-26-13a-15}, \ref{thm-Jun-Ling-2007-AGAG-2}
and \ref{thm-12-9-27-8-6} below which only use the methods due to
\cite{Zhong-Yang-1983} and \cite{Jun Ling-2006-IJM}
(or \cite{JunLing-2007-AGAG}). We refer the interested reader to
consult these references for more details.
\begin{lemma}\label{lem-12-6-19-7-50}
$($see \cite{Jun Ling-2006-IJM}, or \cite{JunLing-2007-AGAG}$)$ Let
\begin{equation}\label{eqn-12-6-19-8-0}
\xi(\theta)=\frac{\cos^2\theta+2\theta\sin\theta\cos\theta+\theta^2-\frac{\pi^2}{4}}{\cos^2\theta}
\quad\hbox{in}\quad (-\frac{\pi}{2},\frac{\pi}{2})
\end{equation}
and $\xi(\pm\frac{\pi}{2})=0$.
Then the function $\xi$ satisfies the following
\begin{equation}\label{12-6-19-8-8}
\frac{\cos^2\theta}{2}\cdot\xi''-\cos\theta\sin\theta\cdot\xi'-\xi=2\cos^2\theta
\quad\hbox{in}\quad (-\frac{\pi}{2},\frac{\pi}{2}),
\end{equation}
Moreover, the function $\xi$ also has the following properties:
\[
\xi(-\theta)=\xi(\theta),\quad\forall\,\,\theta\in(-\frac{\pi}{2},\frac{\pi}{2});
\]
\[
\int_{0}^{\frac{\pi}{2}}\xi(\theta)\mathrm{d}\theta=-\frac{\pi}{2};
\]
\[
\xi'(\theta)<0\quad\hbox{on}\quad (-\frac{\pi}{2},0)
\qquad\hbox{and}\qquad \xi'(\theta)>0\quad\hbox{on}\quad (0,\frac{\pi}{2});
\]
\begin{equation}\label{inequ-12-9-27-11-12}
1-\frac{\pi^2}{4}=\xi(0)\ls\xi(\theta)\ls\xi(\pm\frac{\pi}{2})=0
\quad\hbox{on}\quad [-\frac{\pi}{2},\frac{\pi}{2}];
\end{equation}
\end{lemma}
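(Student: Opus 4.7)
I would handle the five properties in order, with the sign of $\xi'$ being the only substantive point. Setting $P(\theta) := \cos^2\theta + 2\theta\sin\theta\cos\theta + \theta^2 - \frac{\pi^2}{4}$ so that $\xi = P/\cos^2\theta$, a direct computation (using $2\theta\sin\theta\cos\theta = \theta\sin 2\theta$ and the cancellation $-\sin 2\theta+\sin 2\theta$) gives $P'(\theta) = 4\theta\cos^2\theta$. Differentiating the identity $\xi\cos^2\theta=P$ produces the clean first-order relation $\xi'(\theta) = 4\theta + 2\xi(\theta)\tan\theta$, and one more differentiation followed by algebraic rearrangement recovers the ODE \eqref{12-6-19-8-8}. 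Evenness is immediate from $P(-\theta)=P(\theta)$; the Taylor expansion $P(\pi/2-t)=O(t^3)$ versus $\cos^2\theta=t^2(1+O(t^2))$ confirms that the assignment $\xi(\pm\pi/2)=0$ extends $\xi$ continuously, and $\xi(0)=1-\pi^2/4$ is immediate.

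For the integral, let $J(\theta):=\xi'(\theta)\cos^2\theta$. The ODE \eqref{12-6-19-8-8} rewrites as $\frac{1}{2}J'(\theta)=\xi(\theta)+2\cos^2\theta$. Since $J(0)=\xi'(0)=0$ and $J(\pi/2)=0$ (the latter because $\xi'=4\theta+2\xi\tan\theta$ stays bounded while $\cos^2\theta\to 0$), integration from $0$ to $\pi/2$ yields $0=\int_0^{\pi/2}\xi\,d\theta+\pi/2$, i.e.\ the claimed identity.

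The monotonicity is the main work, and I would handle it by a two-stage maximum-principle argument for $J$ on $[0,\pi/2]$. Differentiating $J'=2\xi+4\cos^2\theta$ and substituting $\xi'=J\sec^2\theta$ yields the closed ODE
\[
J''(\theta)=2J(\theta)\sec^2\theta-4\sin 2\theta,\qquad \theta\in(0,\pi/2),
\]
with $J(0)=J(\pi/2)=0$. If $J$ had a strictly negative interior minimum at some $\theta^{**}$, the conditions $J'(\theta^{**})=0$ and $J''(\theta^{**})\geq 0$ combined with the ODE would force $J(\theta^{**})\geq 2\sin(2\theta^{**})\cos^2\theta^{**}>0$, a contradiction; so $J\geq 0$ on $[0,\pi/2]$. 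If further $J$ had an interior zero $\theta^*\in(0,\pi/2)$, that zero would be a local minimum of the nonnegative $C^2$ function $J$, forcing $J''(\theta^*)\geq 0$; but the ODE at $\theta^*$ gives $J''(\theta^*)=-4\sin(2\theta^*)<0$, a second contradiction. Hence $J>0$ strictly on $(0,\pi/2)$, so $\xi'>0$ there, and by evenness $\xi'<0$ on $(-\pi/2,0)$. The two-sided bound \eqref{inequ-12-9-27-11-12} then follows immediately from monotonicity together with the endpoint values $\xi(0)=1-\pi^2/4$ and $\xi(\pm\pi/2)=0$. The genuinely delicate point is the upgrade from $J\geq 0$ to $J>0$: neither the ODE nor any single maximum-principle application produces strict positivity on its own, which is why the two-stage argument above is essential.
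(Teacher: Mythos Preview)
Your proof is correct and complete. The paper itself offers no proof of this lemma at all --- it merely cites \cite{Jun Ling-2006-IJM} and \cite{JunLing-2007-AGAG} --- so you have supplied a self-contained argument where the paper defers to the literature.

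A few remarks on your approach. The reduction to the first-order relation $\xi'=4\theta+2\xi\tan\theta$ via $P'=4\theta\cos^2\theta$ is the cleanest way to reach the ODE \eqref{12-6-19-8-8}, and your use of $J=\xi'\cos^2\theta$ to extract the integral identity from the ODE is elegant: it avoids computing $\int_0^{\pi/2}\xi$ by brute force. For the boundary value $J(\pi/2)=0$, note that you can bypass the boundedness of $\xi'$ entirely by writing $J=4\theta\cos^2\theta+\xi\sin 2\theta$ directly (this is exactly the identity $\xi'\cos^2\theta=P'+\xi\sin 2\theta$ obtained when differentiating $\xi\cos^2\theta=P$), and both terms visibly vanish at $\pi/2$. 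Your two-stage maximum-principle argument for the strict positivity of $J$ on $(0,\pi/2)$ is sound: the first step rules out a negative interior minimum, and the second rules out an interior zero of the nonnegative function $J$ via the sign of $J''=-4\sin 2\theta$ there. This is a tidy alternative to whatever explicit estimates appear in Ling's original papers, and it has the virtue of being entirely intrinsic to the ODE.
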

\begin{corollary}\label{cor-12-6-19-11-20}
Let
\begin{equation}\label{eqn-12-6-19-11-30}
z(\theta)=1+\delta\,\xi(\theta).
\end{equation}
Then $z$ satisfies the
following
\begin{equation}\label{eqn-12-7-8-11-18-22}
\frac{\cos^2\theta}{2}\cdot z''(\theta)
-\cos\theta\sin\theta\cdot
z'(\theta)-z(\theta)+1=2\delta\cos^2\theta\quad\hbox{in}\quad
(-\frac{\pi}{2},\frac{\pi}{2}).
\end{equation}
\[
z(\theta)>0,\quad\forall\,\,\theta \in [-\frac{\pi}{2},\frac{\pi}{2}];
\]
\[
z'(0)=\delta\xi'(0);
\]
\[
z'(\theta)\sin\theta=\delta\,\xi'(\theta)\sin\theta\gs
0,\quad\forall\,\,\theta \in [-\frac{\pi}{2},\frac{\pi}{2}].
\]
\end{corollary}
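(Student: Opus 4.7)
The corollary is an essentially mechanical consequence of Lemma \ref{lem-12-6-19-7-50} combined with the affine definition $z = 1+\delta\xi$. Since $\delta>0$ and the operations involved (differentiation, taking signs) commute with affine transformation, each of the four claims reduces to a corresponding statement for $\xi$ already established in Lemma \ref{lem-12-6-19-7-50}. The plan is to verify the four items in order.

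For the ODE \eqref{eqn-12-7-8-11-18-22}, I would compute $z' = \delta\xi'$ and $z'' = \delta\xi''$, and observe that the purpose of the constant $+1$ on the left-hand side is to absorb the ``$1$'' in $z=1+\delta\xi$, so that $-z+1 = -\delta\xi$. Substituting gives
\[
\tfrac{\cos^2\theta}{2}\,z'' - \cos\theta\sin\theta\cdot z' - z + 1 = \delta\bigl[\tfrac{\cos^2\theta}{2}\,\xi'' - \cos\theta\sin\theta\cdot\xi' - \xi\bigr],
\]
which by \eqref{12-6-19-8-8} equals $\delta\cdot 2\cos^2\theta = 2\delta\cos^2\theta$, as claimed.

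For positivity I would invoke the minimum estimate \eqref{inequ-12-9-27-11-12}, namely $\xi(\theta)\gs 1-\pi^2/4$ on $[-\tfrac{\pi}{2},\tfrac{\pi}{2}]$, which yields $z(\theta) \gs 1 - \delta(\pi^2/4 - 1)$. Combining this with the Lichnerowicz-type bound $\delta \ls (n-1)/(2n) < 1/2$ noted at the beginning of Section 4, we get $z(\theta) > 3/2 - \pi^2/8$, and since $\pi^2 < 12$ this is positive. This is the only place where a nontrivial numerical estimate and the upper bound on $\delta$ enter, so it is the main (admittedly minor) obstacle in the proof; elsewhere only the linearity of the operations is used.

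The remaining two assertions are immediate. Differentiating $z = 1+\delta\xi$ at $\theta=0$ gives $z'(0) = \delta\xi'(0)$. For the sign condition, Lemma \ref{lem-12-6-19-7-50} provides $\xi'(\theta)<0$ on $(-\tfrac{\pi}{2},0)$ and $\xi'(\theta)>0$ on $(0,\tfrac{\pi}{2})$, so $\xi'(\theta)$ and $\sin\theta$ share sign throughout the interval and both vanish at $\theta=0$; multiplying by $\delta>0$ yields $z'(\theta)\sin\theta = \delta\xi'(\theta)\sin\theta \gs 0$, completing the verification.
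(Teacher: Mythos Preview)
Your proof is correct and follows essentially the same approach as the paper: the paper's own proof explicitly carries out only the positivity step via the identical chain $z(\theta)\gs 1+\delta(1-\pi^2/4)>3/2-\pi^2/8\approx 0.26>0$ using $\delta<1/2$, and dismisses the remaining items as ``direct to verify'' from Lemma~\ref{lem-12-6-19-7-50}. Your write-up is simply a more explicit version of the same argument.
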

\begin{proof}
Using \eqref{inequ-12-9-27-11-12}, we easily get
\begin{eqnarray*}
z(\theta)&=&1+\delta\xi(\theta)\gs1+\delta\xi(0)\gs1+\delta(1-\frac{\pi^2}{4})\\
&>&1+\frac{1}{2}(1-\frac{\pi^2}{4})=\frac{3}{2}-\frac{\pi^2}{8}\approx0.26>0.
\end{eqnarray*}
In addition, by Lemma \ref{lem-12-6-19-7-50}, it is direct to verify
the other properties. 
\end{proof}

Using Lemma \ref{lem-11-5-21-21-50} and Corollary \ref{cor-12-6-19-11-20} and
the reduction to absurdity,
we easily prove the following conclusion. For the reader's convenience,
we give a proof below which is first due to \cite{Zhong-Yang-1983},
also due to \cite{Jun Ling-2006-IJM} (or \cite{JunLing-2007-AGAG}).
\begin{lemma}\label{11-5-25-22-05}
Assume that $Z(\theta)$ and $z(\theta)$ are defined by \eqref{11-5-25-22-01}
and \eqref{eqn-12-6-19-11-30}, respectively. Then
\begin{equation}\label{12-5-1-5-26}
Z(\theta)\ls z(\theta).
\end{equation}
\end{lemma}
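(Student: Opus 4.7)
\medskip

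\noindent\textbf{Proof proposal.} My plan is to argue by contradiction using Lemma \ref{lem-11-5-21-21-50} applied to a vertical shift of $z$, exactly in the spirit of the original Zhong--Yang barrier method. Suppose, toward a contradiction, that $Z(\theta)>z(\theta)$ for some $\theta$ in the relevant interval. Since $F$ was extended to a continuous function on the closed interval $[-\arcsin k,\tfrac{\pi}{2}]$ (respectively $[-\tfrac{\pi}{2},\tfrac{\pi}{2}]$ when $k=1$), the function $Z-z$ is continuous on a compact set, and thus
\[
c^{*}=\max_{\theta}\bigl(Z(\theta)-z(\theta)\bigr)>0
\]
is attained at some $\theta_{0}$.

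Next I would check that $\theta_{0}$ lies in the open interior where Lemma \ref{lem-11-5-21-21-50} is applicable. At $\theta=\tfrac{\pi}{2}$ we have $Z(\tfrac{\pi}{2})=1$ and $z(\tfrac{\pi}{2})=1+\delta\,\xi(\tfrac{\pi}{2})=1$, so $Z-z$ vanishes there; similarly at $-\tfrac{\pi}{2}$ when $k=1$. In the case $k<1$, at the left endpoint $-\arcsin k$ the function $u$ attains its minimum, so $\nabla u=0$ and hence $F(-\arcsin k)=0<z(-\arcsin k)$. Thus $\theta_{0}$ must be an interior point, which is the regime covered by Lemma \ref{lem-11-5-21-21-50}.

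Now define the shifted barrier $\tilde{z}(\theta):=z(\theta)+c^{*}$ and verify the four hypotheses of Lemma \ref{lem-11-5-21-21-50}: (1) $\tilde{z}\geq Z$ on the whole interval by the very definition of $c^{*}$; (2) $\tilde{z}(\theta_{0})=Z(\theta_{0})$ at the maximizer; (3) $\tilde{z}'(0)=z'(0)=\delta\,\xi'(0)=0$ by Corollary \ref{cor-12-6-19-11-20}; (4) $\tilde{z}'(\theta_{0})\sin\theta_{0}=\delta\,\xi'(\theta_{0})\sin\theta_{0}\geq 0$, again from Corollary \ref{cor-12-6-19-11-20} (a careful look at inequality \eqref{11-5-21-10-25} in the proof of Lemma \ref{lem-11-5-21-21-50} shows that the nonstrict sign suffices, since the offending term vanishes or is nonnegative). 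Lemma \ref{lem-11-5-21-21-50} then yields
\[
\tilde{z}(\theta_{0})\leq 1-\cos\theta_{0}\sin\theta_{0}\cdot \tilde{z}'(\theta_{0})+\frac{\cos^{2}\theta_{0}}{2}\,\tilde{z}''(\theta_{0})-2\delta\cos^{2}\theta_{0}.
\]
Since $\tilde{z}'=z'$ and $\tilde{z}''=z''$, the right-hand side equals $z(\theta_{0})$ by the ODE \eqref{eqn-12-7-8-11-18-22} for $z$. Hence $z(\theta_{0})+c^{*}\leq z(\theta_{0})$, i.e.\ $c^{*}\leq 0$, contradicting $c^{*}>0$.

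The only genuinely delicate point I anticipate is condition (4): the trial function $z$ satisfies $z'\sin\theta\geq 0$ but not necessarily strict positivity, so one must confirm (by rereading the derivation of \eqref{11-5-21-10-25}) that the nonnegativity, combined with $|\nabla\theta|^{2}\leq\lambda_{1}$ from Lemma \ref{lem-11-5-21-21-16}, is enough to discard the last term. Once this is justified, the argument above is essentially mechanical: the whole force of the lemma lies in the fact that $z$ is engineered as the \emph{equality} case of the differential inequality supplied by Lemma \ref{lem-11-5-21-21-50}, so any vertical upward shift immediately produces a strict contradiction.
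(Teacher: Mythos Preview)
Your proof is correct and follows essentially the same contradiction argument as the paper: shift $z$ upward by the maximum of $Z-z$, verify the hypotheses of Lemma~\ref{lem-11-5-21-21-50} for the shifted barrier, and use the ODE \eqref{eqn-12-7-8-11-18-22} to collapse the inequality back to $z(\theta_0)$. Your treatment is in fact slightly more careful than the paper's, since you explicitly rule out the endpoints (in particular $\theta_0=-\arcsin k$ when $k<1$, where $F=0$) and you note that the nonstrict sign in condition~(4) suffices---a point the paper uses without comment.
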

\begin{proof}
Assume that \eqref{12-5-1-5-26} is not true. Since
$Z(\frac{\pi}{2})=1=z(\frac{\pi}{2})$, then there exists some
$\theta_0\in[-\arcsin k,\frac{\pi}{2})$
$\big($or $(-\frac{\pi}{2},\frac{\pi}{2})$ when $k=1$$\big)$ such that
\begin{equation}\label{12-5-1-6-10}
\sigma=Z(\theta_0)-z(\theta_0)=\max_{0\ls\theta\ls\frac{\pi}{2}}
\{Z(\theta)-z(\theta)\}>0.
\end{equation}
Set $\tilde{z}(\theta_0)=z(\theta)+\sigma$. Obviously,
\[
\tilde{z}(\theta)=z(\theta)+\sigma\gs
z(\theta)+[Z(\theta)-z(\theta)]=Z(\theta),
\]
\[
\tilde{z}(\theta_0)=z(\theta_0)+\sigma=Z(\theta_0),
\]
\[
\tilde{z}'(\theta)=z'(\theta),
\]
\[
\tilde{z}'(\theta_0)\sin\theta_0=z'(\theta_0)\sin\theta_0\gs 0.
\]

In place of $z(\theta)$ in Lemma \ref{lem-11-5-21-21-50} by
$\tilde{z}(\theta)$, we deduce by Lemma
\ref{lem-11-5-21-21-50} and \eqref{eqn-12-7-8-11-18-22} that
\begin{eqnarray*}
Z(\theta_0)&=&\tilde{z}(\theta_0)
\ls1-\cos\theta_0\sin\theta_0\cdot\tilde{z}'(\theta_0)
+\frac{\cos^2\theta_0}{2}\cdot\tilde{z}''(\theta_0)-2\delta\cos^2\theta_0\\
&=&1-\cos\theta_0\sin\theta_0\cdot z'(\theta_0)
+\frac{\cos^2\theta_0}{2}\cdot
z''(\theta_0)-2\delta\cos^2\theta_0=z(\theta_0).
\end{eqnarray*}
But this contradicts \eqref{12-5-1-6-10}. The proof is complete.
\end{proof}

\begin{corollary}\label{11-5-25-22-15}
The assumption as in Theorem \ref{thm-Lichnerowicz}.
In any case, the following estimate holds.
\begin{equation}\label{11-5-25-22-26}
F(\theta)\ls \lambda_1z(\theta),
\end{equation}
where $F(\theta)$ and $z(\theta)$ are defined by
\eqref{eqn-11-5-16-22-08} and \eqref{eqn-12-6-19-11-30},
respectively.
\end{corollary}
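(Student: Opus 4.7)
The plan is to reduce this corollary directly to the preceding Lemma \ref{11-5-25-22-05}. By the very definition of $Z(\theta)$ in \eqref{11-5-25-22-01}, one has $F(\theta) = \lambda_1 Z(\theta)$ for every $\theta$ in the relevant domain, and Lemma \ref{11-5-25-22-05} furnishes the pointwise bound $Z(\theta) \leq z(\theta)$. Multiplying through by $\lambda_1 > 0$ yields exactly the desired inequality \eqref{11-5-25-22-26}. So the corollary is, strictly speaking, a single-line consequence of the previous lemma together with the normalization identity.

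In other words, this statement is a bookkeeping step that repackages the content of Lemma \ref{11-5-25-22-05} in terms of $F$ rather than the normalized quantity $Z$. The substantive mathematical work has already been carried out upstream: Lemma \ref{lem-11-5-21-21-50} produces the differential inequality that any maximizing comparison point must satisfy; Corollary \ref{cor-12-6-19-11-20} exhibits $z(\theta) = 1 + \delta\,\xi(\theta)$ as a barrier meeting the structural hypotheses of that lemma with equality in \eqref{eqn-12-7-8-11-18-22}; and Lemma \ref{11-5-25-22-05} then runs the standard touching argument (translate $z$ up by $\sigma > 0$ so that it first meets $Z$ from above at some $\theta_0$, apply Lemma \ref{lem-11-5-21-21-50} at that contact point, and deduce a contradiction with $\sigma > 0$) to obtain $Z \leq z$ globally.

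Accordingly, I do not anticipate any real obstacle at the level of the corollary itself. The only thing worth flagging is that the hypothesis chain must be traced: Lemma \ref{11-5-25-22-05} is applied under the standing assumptions of Theorem \ref{thm-Lichnerowicz} because its proof invokes Lemma \ref{lem-11-5-21-21-50}, whose Ricci lower bound $\mathrm{Ric}(M)\geq (n-1)K$ and boundary convexity/mean curvature conditions are precisely those of Theorem \ref{thm-Lichnerowicz}. Under this same umbrella hypothesis the corollary holds, and one simply writes the two-line deduction $F(\theta) = \lambda_1 Z(\theta) \leq \lambda_1 z(\theta)$ to conclude.
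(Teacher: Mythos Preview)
Your proposal is correct and matches the paper's approach: the corollary is stated without proof because it follows immediately from Lemma~\ref{11-5-25-22-05} together with the defining relation \eqref{11-5-25-22-01}, exactly as you write. Your additional commentary on the upstream logical chain is accurate but unnecessary for the corollary itself.
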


Our argument above establishes the inequality
\eqref{11-5-25-22-26}, which is an improved estimate of the upper
bound for $F(\theta)$ as required.

\section{Proof of some theorem}
Following the same argument in \cite{Jun Ling-2006-IJM} (or \cite{JunLing-2007-AGAG}),
we now use the estimate of $F(\theta)$ to prove Theorem \ref{thm-12-9-26-13a-15} as follows.
\begin{proof}
\eqref{11-5-25-22-26} implies that
\[
\sqrt{\lambda_1}\gs\sqrt{\frac{\abs{F(\theta)}}{z(\theta)}}
=\sqrt{\frac{\abs{F(\theta)}}{1+\delta\,\xi(\theta)}}\gs
\frac{\abs{\nabla\theta}}{\sqrt{1+\delta\,\xi(\theta)}},
\]
i.e.,
\begin{equation}\label{11-5-25-22-31}
\sqrt{\lambda_1}\gs\frac{\abs{\nabla\theta}}{\sqrt{1+\delta\,\xi(\theta)}},
\end{equation}
where $\xi(\theta)$ is defined by \eqref{eqn-12-6-19-8-0}.

Take $q_1\in M$ such that $\theta(q_1)=\frac{\pi}{2}$.
Choose $q_2\in\p M$ such $\dist(q_1,q_2)=\dist(q_1,\p M)$. Clearly $\theta(q_2)=0$.
We denote by $d'$ the length of a
shortest curve $\gamma$ which connects $q_1$ with $q_2$ on $M$.
Let $\tilde{d}$ be the diameter of the largest interior ball in $M$
(i.e., $\tilde{d}/2$ is the inscribed radius of $M$).
Clearly, $d'\ls\tilde{d}/2$.
Integrating both sides of
\eqref{11-5-25-22-31} along the curve $\gamma$, we derive the following
\begin{eqnarray*}
\sqrt{\lambda_1}\,\frac{\tilde{d}}{2} &\gs& \sqrt{\lambda_1}\,d'
=\int_\gamma\sqrt{\lambda_1}\mathrm{d}s
\gs\int_\gamma\frac{1}{\sqrt{1+\delta\,\xi(\theta)}}\abs{\nabla\theta} \mathrm{d}s\\
&\gs&\int_\gamma\frac{1}{\sqrt{1+\delta\,\xi(\theta)}}
\mathrm{d}\theta =\int_{0}^\frac{\pi}{2}
\frac{1}{\sqrt{1+\delta\,\xi(\theta)}}
\mathrm{d}\theta\\[10pt]
&\gs&\Big(\int_{0}^\frac{\pi}{2}\mathrm{d}\theta\Big)^\frac{3}{2}\Big/
\Big\{\int_{0}^\frac{\pi}{2}[1+\delta\,\xi(\theta)]\mathrm{d}\theta\Big\}^\frac{1}{2}
\\[10pt]
&=&(\frac{\pi}{2})^\frac{3}{2}\Big/
\Big\{\int_{0}^\frac{\pi}{2}[1+\delta\,\xi(\theta)]\mathrm{d}\theta\Big\}^\frac{1}{2}.
\end{eqnarray*}
So, dividing by $\frac{\tilde{d}}{2}$ and squaring the two sides successively, we have
\[
\lambda_1\gs\frac{\pi^3}{2\tilde{d}^2}\Big/
\int_{0}^\frac{\pi}{2}[1+\delta\,\xi(\theta)]\mathrm{d}\theta.
\]
On the other hand,
\[
\int_{0}^\frac{\pi}{2}[1+\delta\,\xi(\theta)]\mathrm{d}\theta
=\frac{\pi}{2}+\delta\int_{0}^\frac{\pi}{2}\xi(\theta)\mathrm{d}\theta
=\frac{\pi}{2}(1-\delta).
\]
Hence, we conclude that
\[
\lambda_1\gs\frac{1}{1-\delta}\cdot\frac{\pi^2}{\tilde{d}^2},
\]
or, equivalently:
\[
\lambda_1(1-\delta)\gs\frac{\pi^2}{\tilde{d}^2}.
\]
Therefore, we obtain
\[
\lambda_1\gs\frac{\pi^2}{\tilde{d}^2}+\lambda_1\delta
=\frac{\pi^2}{\tilde{d}^2}+\frac{(n-1)K}{2}.
\]
This completes the proof.
\end{proof}

Using the similar argument as in the proof of Theorem
\ref{thm-12-9-26-13a-15}, we prove Theorem
\ref{thm-Jun-Ling-2007-AGAG-2} and Theorem \ref{thm-12-9-27-8-6}
together in the following.
\begin{proof}
Take $x_1, x_2\in M$ such that $\theta(x_1)=-\frac{\pi}{2}$,
$\theta(x_2)=\frac{\pi}{2}$. We denote by $d'$ the length of a
shortest curve $\gamma$ which connects $x_1$ with $x_2$ on $M$.
Let $d$ be the diameter of $M$.
Clearly, $d'\ls d$. Integrating both sides of \eqref{11-5-25-22-31}
along the curve $\gamma$, we derive the following
\begin{eqnarray*}
\sqrt{\lambda_1}d &\gs& \sqrt{\lambda_1}d'
=\int_\gamma \sqrt{\lambda_1}\mathrm{d}s
\gs\int_\gamma \frac{1}{\sqrt{1+z(\theta)}}\abs{\nabla\theta} \mathrm{d}s\\
&\gs&\int_\gamma \frac{1}{\sqrt{1+z(\theta)}} \mathrm{d}\theta
=\int_{-\frac{\pi}{2}}^\frac{\pi}{2} \frac{1}{\sqrt{1+z(\theta)}}
\mathrm{d}\theta\\[10pt]
&\gs&\Big(\int_{-\frac{\pi}{2}}^\frac{\pi}{2}\mathrm{d}\theta\Big)^\frac{3}{2}\Big/
\Big\{\int_{-\frac{\pi}{2}}^\frac{\pi}{2}[1+z(\theta)]\mathrm{d}\theta\Big\}^\frac{1}{2}
\\[10pt]
&=&\pi^\frac{3}{2}\Big/
\Big\{\int_{-\frac{\pi}{2}}^\frac{\pi}{2}[1+z(\theta)]\mathrm{d}\theta\Big\}^\frac{1}{2}.
\end{eqnarray*}
So we have
\[
\lambda_1\gs\frac{\pi^3}{d^2}\Big/
\int_{-\frac{\pi}{2}}^\frac{\pi}{2}[1+z(\theta)]\mathrm{d}\theta.
\]
In addition,
\[
\int_{-\frac{\pi}{2}}^\frac{\pi}{2}[1+z(\theta)]\mathrm{d}\theta
=\pi(1-\theta).
\]
Thus
\[
\lambda_1\gs\frac{1}{1-\delta}\cdot\frac{\pi^2}{d^2},
\]
or, equivalently:
\[
\lambda_1(1-\delta)\gs\frac{\pi^2}{d^2}.
\]
Hence, we get
\[
\lambda_1\gs\frac{\pi^2}{d^2}+\lambda_1\delta=\frac{\pi^2}{d^2}+\frac{(n-1)K}{2}.
\]
This is the required estimate. So far we complete the proof.
\end{proof}

\vskip1cm


\section{Further open problem}
At last we put forward a question
\begin{quote}
\emph{Does every compact Riemannian manifold have the symmetry that the minimum of the
first eigenfunction is the negative of the maximum, i.e., $k=1$ in \eqref{11-5-16-18-30}?}
\end{quote}
Indeed, this question was already implicit in \cite{Zhong-Yang-1983}.
Until now one do not know whether the first eigenfunction is symmetric or not.
Perhaps it is more difficult to solve this question
even if we add some assumption on the underlying Riemannian manifold.
It is well know that if one can give an affirmative answer to this difficult question,
or equivalently one can prove $k=1$, then also one can obtain easily the optimal estimate
\[
\lambda_1\gs \frac{\pi^2}{d^2}+\frac{(n-1)K}{2}
\]
by Theorem \ref{thm-Jun-Ling-2007-AGAG-2} and \ref{thm-12-9-27-8-6}.




\end{document}